\documentclass{amsart}
\usepackage{amsfonts}
\usepackage{amsmath,amssymb}
\usepackage{amsthm}
\usepackage{amscd}
\usepackage{graphics}
\usepackage{graphicx}

\theoremstyle{remark}{
\newtheorem{Def}{{\rm Definition}}

\newtheorem{Rem}{{\rm Remark}}

}
\theoremstyle{plain}
{

\newtheorem{Thm}{Theorem}
\newtheorem{MainThm}{Main Theorem}

}

\begin{document}
\title[Algebraic realization of certain round fold maps]{On real algebraic realization of round fold maps of codimension $-1$}
\author{Naoki kitazawa}
\keywords{(Non-singular) real algebraic manifolds and real algebraic maps. Smooth maps. Fold maps. Round fold maps. Special generic maps. Graphs. Digraphs. Reeb (di)graphs. Poincar\'e-Reeb graphs. \\
\indent {\it \textup{2020} Mathematics Subject Classification}: 05C10, 14P05, 14P25, 57R45, 58C05, 58C06.}

\address{Osaka Central Advanced Mathematical Institute (OCAMI) \\
3-3-138 Sugimoto, Sumiyoshi-ku Osaka 558-8585
TEL: +81-6-6605-3103
}
\email{naokikitazawa.formath@gmail.com}
\urladdr{https://naokikitazawa.github.io/NaokiKitazawa.html}
\maketitle
\begin{abstract}
The canonical projections of the unit spheres are generalized to {\it special generic} maps and {\it round fold} maps, for example.
They are generalizations from the viewpoint of singularity theory of differentiable maps and these maps restrict the topologies and the differentiable structures of the manifolds.

We are concerned with round fold maps, defined as smooth maps locally represented as the product map of a Morse function and the identity map on a smooth manifold, and maps with singular value sets being concentric spheres. A bit different from differential topology, we are concerned with real algebraic geometric aspects of these maps. We discuss real algebraic realization of round fold maps of codimension $-1$ as our new work.
Real algebraic realization of these maps is of fundamental and important studies in real algebraic geometry and a new study recently developing mainly due to the author. 
\end{abstract}
%【REVISE】 combinatoric ～ is → combinatorial object. It is .
%【REVISE】  such that a point is a vertex if and only if the corresponding connected component of the level set contains some singular points → whose vertex set is the set of all points containing some singular points in the corresponding connected component of the level set .
%【REVISE】 We delete "extending the result before".
\section{Introduction.}
\label{sec:1}
Let ${\mathbb{R}}^k$ denote the $k$-dimensional {\it Euclidean space} ({\it real affine space}), with $\mathbb{R}:={\mathbb{R}}^1$. Let ${\pi}_{k_1+k_2,k_2}:{\mathbb{R}}^{k_1+k_2} \rightarrow {\mathbb{R}}^{k_2}$ denote the canonical projection mapping $x:=(x_1,\cdots x_{k_1},\cdots x_{k_1+k_2})$ to $(x_1,\cdots x_{k_2})$ with $k_1$ and $k_2$ being positive integers. The {\it canonical projection of the $m$-dimensional unit sphere $S^m:=\{(x_1,\cdots x_{m+1}) \mid {\Sigma}_{j=1}^{m+1} {x_j}^2=1\}$} to ${\mathbb{R}}^n$ is of most fundamental smooth (real algebraic) maps on closed manifolds (real algebraic compact manifolds), defined as the restriction of ${\pi}_{m+1,n}$.

\subsection{The canonical projection of the unit sphere and fold (special generic) maps.}
Let $c {\mid}_Z$ denote the restriction of a map $c:X \rightarrow Y$ to a subset $Z \subset X$. 
Hereafter, we also need fundamental singularity theory of differentiable maps and see \cite{golubitskyguillemin} for this.

The canonical projection ${\pi}_{m+1,n} {\mid}_{S^m}$ of $S^m$ is a simplest example of {\it special generic} maps and {\it round fold} maps. These classes of smooth maps are both subclasses of the class of so-called {\it fold} maps.
They are important in global singularity theory and applications to algebraic topology, differential topology, and geometric topology of manifolds. This is an application of higher dimensional versions of theory of Morse functions.
We explain these maps. A {\it singular} point of a smooth map $f$ between smooth manifolds is a point of the manifold of the domain where the rank of the differential is smaller than the minimum between the dimensions of the manifolds of the domain and the target. The {\it singular set} $S(f)$ of the smooth map is the set of all singular points of the map and the image is the {\it singular value set} of the map. 
We also use "critical" instead of "singular" in the case $f$ is a real-valued function. A {\it diffeomorphism} is a smooth map being a homeomorphism and having no singular point. Two manifolds $X_1$ and $X_2$ are {\it diffeomorphic}, or equivalently, a manifold is {\it diffeomorphic} to the other, if a diffeomorphism between them exists. 

A {\it fold} map is a smooth map $f:M \rightarrow {\mathbb{R}}^n$ on a manifold with no boundary of dimension $m \geq n$ whose critical point is always of the form $f(x_1,\cdots x_m)=(x_1,\cdots x_{n-1}, {\Sigma}_{j=1}^{m-n-i(p)+1} {x_{n-1+j}}^2-{\Sigma}_{j=1}^{i(p)} {x_{m-i(p)+j}}^2)$ for some local coordinates and an integer $0 \leq i(p) \leq \frac{m-n+1}{2}$. It is a kind of exercises on singularity theory of smooth maps to prove that $i(p)$ is uniquely chosen ($i(p)$ is the {\it index of $p$ for $f$}). The singular set $S(f)$ and the set $F_i(f)$ of all singular points of index $i$ for $f$ is a smooth submanifold of dimension $n-1$ with no boundary and the restriction of $f$ to $S(f)$ is a smooth immersion. A Morse function on a smooth manifold with no boundary is a fold map and for the case $n=1$ and in our paper, we also consider Morse functions on smooth manifolds with non-empty boundaries which have no critical point on the boundaries and which are constant on connected components of the boundaries. As is well-known, for Morse functions on smooth manifolds, the integer $i(p)$ can be chosen as a unique integer $0 \leq i(p) \leq m$ respecting the natural order on $\mathbb{R}$. For Morse functions, see \cite{milnor}. The paper \cite{thom, whitney} are related pioneering studies on fold maps and more general generic smooth maps into ${\mathbb{R}^2}$ and \cite{eliashberg1, eliashberg2} are on existence theory of fold maps via methods on differential equations and homotopy. The paper \cite{saeki1} is of pioneering studies on fold maps related to differential topology of manifolds.  

A {\it special generic} map is a fold map with $i(p)=0$ for each singular point $p$ of it.  ${\pi}_{m+1,n} {\mid}_{S^m}$ is a special generic map. A Morse function with exactly two critical points on a closed manifold is special generic. This is known as a smooth function on Reeb's sphere theorem. We have a natural special generic map $f:M \rightarrow {\mathbb{R}}^n$ on a manifold $M$ represented as a connected sum ${\sharp}_{j=1}^l (S^{k_j} \times S^{m-k_j})$ with $l>0$, $m \geq n \geq 2$ and $1 \leq k_j \leq n-1$ into ${\mathbb{R}}^n$ with $f {\mid}_{S(f)}$ being an embedding and the image diffeomorphic to a boundary connected sum ${\natural}_{j=1}^l (S^{k_j} \times D^{n-k_j})$ : hereafter, we consider connected sums and boundary connected sums in the smooth category unless otherwise stated and we use such notation. It is also shown that these maps restrict the topologies and the differentiable structures of the manifolds. Papers such as \cite{burletderham, furuyaporto, saeki2} are of important pioneering studies and \cite{saekisakuma1, saekisakuma2} are also important.

It is also pointed out first in \cite{kitazawa5} that a special generic map $f:M \rightarrow {\mathbb{R}}^n$ on a closed and connected manifold of a certain class including ${\pi}_{m+1,n} {\mid} S^m$ is explicitly represented as the restriction of ${\pi}_{m+1,n}$ to a real algebraic manifold. This is remarked in \cite{kitazawa7} again and discussed in a certain generalized situation. This method is important and reviewed in Theorem \ref{thm:5} in a self-contained way.
\subsection{Round fold maps and our main result.}
We state our main theorem. Before this, we review or introduce several notions.

Two smooth maps $f_i:M_i \rightarrow N_i$ with $i=1,2$ are said to be {\it $C^{\infty}$ equivalent} or $\mathcal{A}$-equivalent if and only if there exists a pair $({\phi}_M:M_1 \rightarrow M_2,{\phi}_N:N_1 \rightarrow N_2)$ with the relation ${\phi}_N \circ f_1=f_2 \circ {\phi}_M$. If we can choose ${\phi}_N$ as the identity map on $N:=N_1=N_2$, then they are also said to be {\it $\mathcal{R}$-equivalent}. These are of course equivalence relations on the class of smooth maps. We also use the phrases "$\mathcal{E}$-equivalent to" and "up to $\mathcal{E}$-equivalence", for example (for $\mathcal{E}=\mathcal{A}, \mathcal{R}$).

A {\it round} fold map $f:M \rightarrow {\mathbb{R}}^n$ on an $m$-dimensional closed and connected manifold $M$ with $m \geq n \geq 2$ is defined as a fold map $\mathcal{A}$-equivalent to a fold map $f_0:M_0 \rightarrow {\mathbb{R}}^n$ with $f_0 {\mid}_{S(f_0)}$ being an embedding and $f_0(S(f_0))={\bigcup}_{i=1}^l \{x \in {\mathbb{R}}^n \mid {\Sigma}_{j=1}^n {x_j}^2=i^2\}$ with an integer $l>0$.
% (we call this map $f$ a {\it normal form} of a round fold map and we essentially consider such maps).
For a ray $\{(tx_{0,1},\cdots tx_{0,n}) \mid t \geq \frac{1}{2}\}$ with $x_0=(x_{0,1},\cdots x_{0,n}) \in S^{n-1}$, we can define a Morse function mapping each point to $t \geq \frac{1}{2}$ as the restriction to the preimage of the ray and this is a {\it page function} ${\tilde{f}}_{{\rm P},x_0}$ of the map $f$. 
 ${\pi}_{m+1,n} {\mid}_{S^m}$ is a round fold map. We focus on these maps in the present paper. Round fold maps have been first introduced in \cite{kitazawa1, kitazawa2, kitazawa3} and studied in \cite{kitazawa4} and later in \cite{kitazawasaeki}, for example. 

On a manifold $M$ represented as a connected sum ${\sharp}_{j=1}^l (S^{n} \times S^{m-n})$ with $l>0$ and $m>n \geq 2$, there exists a round fold map $f:M \rightarrow {\mathbb{R}}^n$ and we can generalize the connected summands of $M$ to general smooth bundles over $S^{n}$ whose fibers are $S^{m-n}$: a {\it smooth} bundle is a bundle whose fiber is a smooth manifold and whose structure group consists of diffeomorphisms on the fiber and topologized with the so-called {\it Whitney $C^{\infty}$ topology}. We can have another round fold map into ${\mathbb{R}}^{n^{\prime}}$ with $n>n^{\prime} \geq 2$ by composing a round fold map into ${\mathbb{R}}^n$ with ${\pi}_{n,n^{\prime}}$.

 We can consider the quotient space of a topological space $X$ for a (continuous) map $c:X \rightarrow Y$ as follows. Two points $x_1 \in X$ and $x_2 \in X$ are equivalent and $x_1 {\sim}_c x_2$ if and only if $x_1$ and $x_2$ are in a same connected component of a same preimage $c^{-1}(y)$. We can define the quotient map $q_c:X \rightarrow R_c$ onto the quotient space $R_c:=X/{\sim c}$ with the uniquely defined continuous map $\bar{c}$ satisfying $c=\bar{c} \circ q_c$. In the case $Y:=\mathbb{R}$, each preimage $c^{-1}(y)$ is a {\it level set} of $c$ and its connected component is a {\it contour} of $c$. If $c:X \rightarrow \mathbb{R}$ is a differentiable function, then its contour having (no) critical points is a {\it critical} (resp. {\it regular}) contour. We use the notation $f:M \rightarrow \mathbb{R}$ again. The quotient space $R_f:=M/{\sim f}$ has the structure of a graph by defining its vertex suitably, in several situations. For example, consider the case of a Morse(-Bott) function $f$ on a compact manifold $M$, which has no critical point on the boundary, which is constant on each connected component of the boundary, and whose critical value set is finite. In these cases, the quotient space is regarded as a graph whose vertex $v$ is defined as a point with ${q_f}^{-1}(v)$ being a critical contour of $f$ or a connected component of the boundary $\partial M$ of the manifold $M$. The graph $R_f$ is the {\it Reeb graph} of $f$. Such objects are classical and have been strong tools in understanding manifolds by nice functions such as Morse functions (\cite{reeb}). For a Morse function on a closed manifold, $R_f$ has been explicitly shown to be a graph by this rule, in \cite{izar}. In \cite{saeki2} (\cite[Theorem 3.1]{saeki2}), the space $R_f$ has been shown to be a graph by this rule, in the case where the critical value set of a smooth function $f$ on a compact manifold is finite.

\setcounter{MainThm}{-1}
\begin{MainThm}
\label{mthm:0}
Let $m>3$ be an integer. A round fold map $f:M \rightarrow {\mathbb{R}}^{m-1}$ on an $m$-dimensional closed and connected manifold $M$ such that for a page function $\tilde{f_{{\rm P},x_0}}$, the function $\bar{\tilde{f_{{\rm P},x_0}}}$ is the composition of some embedding into ${\mathbb{R}}^2$ with ${\pi}_{2,1}$ and that either of the following is satisfied is represented as the restriction of ${\pi}_{m+1,m-1}$ to the zero set of a suitable real polynomial function, up to $\mathcal{A}$-equivalence.
\begin{enumerate}
\item \label{mthm:0.1} For a closed, connected and orientable surface $S$, $M=S^{m-2} \times S$. In addition, either $m>4$, or $m=4$ and $S(f)-F_0(f)$ is not empty.
\item \label{mthm:0.2} $M$ is orientable and $f(M)$ is diffeomorphic to $D^{m-1}$.
\end{enumerate}
\end{MainThm}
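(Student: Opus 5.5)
The plan is to reduce the statement to the real algebraic realization of a special generic map, via the method of \cite{kitazawa5, kitazawa7} that will be reviewed as Theorem \ref{thm:5}. That method says roughly the following. Let $D \subset \mathbb{R}^n$ be a compact region cut out by finitely many polynomial inequalities with non-singular boundary. Then the zero set of $\prod_j f_j(x_1,\cdots x_n) - \sum_{k=1}^{m-n+1} x_{n+k}^2$ (suitably arranged) is a non-singular real algebraic manifold. Its projection by ${\pi}_{m+1,n}$ is a special generic map onto $D$, or a map whose preimages are spheres over interior points collapsing at the boundary.

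Here the target is $\mathbb{R}^{m-1}$ and the fibres over regular values are closed $1$-manifolds, namely disjoint unions of circles. The assumption on the page function says that the Reeb graph $R_{\tilde{f}_{{\rm P},x_0}}$ embeds in $\mathbb{R}^2$ so that $\bar{\tilde{f}}_{{\rm P},x_0}$ becomes the height ${\pi}_{2,1}$. So I will realize the page as a planar region whose Poincar\'e--Reeb-type projection is the page function.

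Concretely, I will first choose a planar region $P \subset \{(t,s)\}$ with $t \geq \frac{1}{2}$, bounded by non-singular real algebraic curves. Its projection to $t$ should have the embedded Reeb graph (thickened) as its Poincar\'e--Reeb graph, with critical values $t=1,\cdots,l$ and fold-type tangencies. I will then rotate this region: replace $t$ by $\sqrt{x_1^2+\cdots+x_{m-1}^2}$, i.e. use polynomials in $|x|^2$ and $s$, to obtain a region $W \subset \mathbb{R}^{m}$ that is the product-like rotation of $P$ around the origin. The polynomial realization is then the zero set of $g(x,s) - x_{m+1}^2$, where $g>0$ defines the interior of $W$. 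By the analogue of Theorem \ref{thm:5}, this is a non-singular hypersurface in $\mathbb{R}^{m+1}$, namely the double of $W$ along its boundary. Its projection ${\pi}_{m+1,m-1}$ forgets $(s,x_{m+1})$, and on each page it gives circles over interval pieces of $P$, i.e. exactly fibres of a round fold map with singular values at the concentric spheres. Non-degeneracy of the tangencies in $P$ gives the fold singularities.

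The topological part is to check that in cases (\ref{mthm:0.1}) and (\ref{mthm:0.2}) the resulting manifold and map agree with the given $f$ up to $\mathcal{A}$-equivalence. A round fold map is determined by its page (the Morse function on a surface with boundary over $[\frac{1}{2},l+1]$) together with the monodromy around $S^{m-2}$. In case (\ref{mthm:0.2}), orientability and $f(M)\cong D^{m-1}$ force the innermost structure to cap off, so that $M$ is the double of a rotated page. I will use that for orientable circle-fibred pages the monodromy can be taken trivial, or at worst a reflection that is absorbed by ${\phi}_M$. In case (\ref{mthm:0.1}), $M=S^{m-2}\times S$ means the bundle over $S^{m-2}$ is trivial with fibre the doubled page. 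For $m>4$ this follows since $\pi_{m-3}$ of the relevant diffeomorphism group is handled by isotopy classes; for $m=4$ the extra hypothesis $S(f)-F_0(f)\neq\emptyset$ is what excludes the exceptional case.

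I expect the main obstacle to be this identification step: showing that the abstract round fold map is $\mathcal{A}$-equivalent to the rotated algebraic model. It needs a classification of round fold maps of codimension $-1$ by page data plus a monodromy in $\pi_{m-2}$ of a diffeomorphism group of the page, and this is where the dimension and orientability assumptions are consumed. The algebraic construction of $P$ with prescribed Poincar\'e--Reeb graph is more routine: I would build it by approximating a smooth region by one defined by a product of polynomials, using genericity to keep the boundary non-singular.
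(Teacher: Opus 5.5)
Your plan for case (1) is essentially the paper's proof of Main Theorem~\ref{mthm:1}. Read your ``replace $t$ by $\sqrt{\cdot}$'' as building the algebraic domain directly in variables $(u,s)$ and substituting $u=\sum_{j=1}^{m-1}x_j^2$; otherwise you do not get a polynomial.

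Case (2), however, is not covered by your construction. If $P\subset\{t\ge\frac12\}$ and $W$ is its rotation, then $W\subset\{|x|\ge\frac12\}$. The image of ${\pi}_{m+1,m-1}$ on the zero set of $g-x_{m+1}^2$ is then ${\pi}_{m,m-1}(\overline{W})$, which is an annulus. So this map is never $\mathcal{A}$-equivalent to one with $f(M)\cong D^{m-1}$.

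Moreover, in case (2) the page function lives on a surface whose boundary is the minimum level, so its SM-digraph has $G_{\rm m}\neq\emptyset$. Such a digraph is not the Poincar\'e--Reeb digraph of a bounded domain. Any bounded $P$ realizing the underlying digraph turns each vertex of $G_{\rm m}$ into a minimum of the new page function. You would get a closed page, hence again an annulus image. Your sentence that ``the innermost structure caps off'' describes the abstract map, but nothing in your algebraic model produces the part $D^{m-1}\times F$ over the centre.

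The missing idea, used in the proof of Main Theorem~\ref{mthm:2}, is to pass to the double of the SM-digraph (Definition~\ref{def:2}: shift the minimum to $0$ and reflect the height). Embed the double in $\mathbb{R}^2$ symmetrically with respect to the line where the height vanishes, thicken it symmetrically, and rotate about that line. The resulting region in $\mathbb{R}^m$ meets the $x_m$-axis, which gives the disc image and the product part over the centre.

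Boundary curves off the axis come in mirror pairs and are handled by polynomials in $(\sum_{j=1}^{m-1}x_j^2,x_m)$, as in case (1). Each curve crossing the axis, though, becomes a hypersurface diffeomorphic to $S^{m-1}$ in $\mathbb{R}^m$, and Theorems~\ref{thm:6} and \ref{thm:7} do not provide these. The paper takes smooth defining functions for them and approximates by polynomials $h_{{\rm S},i'}$, using stability of round fold maps to preserve the $\mathcal{A}$-class; this is why the factor $H_{\rm S}$ appears.

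Separately, you should not try to rebuild a monodromy classification for the identification step; as sketched it does not go through. For instance, $M=S^{m-2}\times S$ does not by itself trivialize the bundle, whose structure group is the automorphism group of the page function. What you need is exactly Theorems~\ref{thm:2}--\ref{thm:4}: every map in question is a TSC map, and TSC maps are determined by the SM-digraph of the page function. The $m=4$ hypothesis is there to guarantee at least two edges.
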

The next section is for preliminaries and we explain some terminologies and notions more precisely. In the third section, we introduce Main Theorem \ref{mthm:0} in a refined way, again, as Main Theorems \ref{mthm:1} and \ref{mthm:2}. We also prove the theorems.
\section{Preliminaries.}
We review important notions and arguments.
\subsection{Reeb (di)graphs of Morse functions.}
Hereafter, a {\it graph} means a $1$-dimensional finite and connected CW complex the closure of whose $1$-cell is always homeomorphic to $D^1$. A {\it vertex} (an {\it edge}) of a graph means a $0$-cell (resp. $1$-cell) of the graph.
The {\it degree} of a vertex of a graph means the number of its edges incident to the vertex. 
A {\it digraph} $(G,c_G)$ means a graph equipped with a continuous function $c_G:G \rightarrow \mathbb{R}$ which is injective on each edge. In this situation, each edge of $G$ is oriented by the rule that an edge $e_{v_1,v_2}$ of $G$ incident to two distinct vertices $v_1$ and $v_2$ of the graph is oriented from $v_1$ to $v_2$ if and only if $c_G(v_1)<c_G(v_2)$. A {\it source} ({\it sink}) of a digraph means its vertex from which all of its edges depart (resp. which all of its edges enter). The Reeb graph $R_f$ is regarded as the digraph $(R_f,\bar{f})$.
For two digraphs, the notion of isomorphism is defined by defining {\it isomorphisms} as isomorphisms of the (underlying) graphs preserving the orientations and the orders of the values at the vertices.
\subsection{M-digraphs.} 
A {\it Morse} digraph or an {\it M-digraph} means a digraph whose sinks and edges are all of degree $1$. We define a specific class of M-digraphs with the third sets consisting of finitely many vertices of the graphs. The third sets here are possibly empty.
\begin{Def}
A {\it simple} Morse digraph or an {\it SM-digraph} $(G,c_G,G_{\rm m})$ is a triplet with $(G,c_G)$ being an M-graph and $G_{\rm m}$ being a set of finitely many vertices of sources possibly empty such that the degrees of vertices of $G$ are all $1 $, $2$ or $3$, that at distinct vertices of the graph outside $G_{\rm m}$, the values of $c_G$ are mutually distinct and that in the case where $G_{\rm m}$ is non-empty the following hold.
\begin{itemize}
	\item The values of $c_G$ outside the non-empty set $G_{\rm m}$ are always greater than the minimum $m_{c_G}$ of $c_G$
	\item The level set ${c_G}^{-1}(m_{c_G})$ is $G_{\rm m}$. 
\end{itemize}
\end{Def}
A {\it simple} Morse function $f:M \rightarrow \mathbb{R}$ is a Morse function which has no critical point on the boundary and the values at distinct critical points of which are always distinct. In addition, for the definition, in the case the boundary $\partial M$ is non-empty, we pose a condition that for the minimum $m_f$ the level set $f^{-1}(m_f)$ coincides with the boundary $\partial M$ of $M$. 

For related theory on these Morse functions and graphs, consult \cite{gelbukh, michalak} as recent studies and see also \cite{kitazawa6}. Note that the notion of SM-digraph is formulated first in the present paper as the author knows. We also define {\it isomorphisms} for the triplets, as isomorphisms of digraphs mapping the third set of a triplet onto the third set of another triplet.

Theorem \ref{thm:1} is regarded as an important fact obtained by summarizing several results on structures of (simple) Morse functions on compact surfaces and compact manifolds of dimensions at least $3$. For this, see also \cite{kulinich, martinezalfaromezasarmientooliveira} for example.
\begin{Thm}
\label{thm:1}
\begin{enumerate}
\item \label{thm:1.1} For a simple Morse function $f:M \rightarrow \mathbb{R}$ on a compact and connected manifold of dimension at least $2$, the Reeb digraph $(R_f,\bar{f})$ is regarded as an SM-graph $(R_f,\bar{f},{R_f}_{\rm m})$ with the following rule. 
\begin{itemize}
\item The third set ${R_f}_{\rm m}$ is empty if $M$ is closed.
	\item  The third set ${R_f}_{\rm m}$ is the level set ${\bar{f}}^{-1}(m_f)$ for the minimum $m_f$ of $f$ if the boundary $\partial M$ of $M$ is non-empty.\end{itemize}
\item  \label{thm:1.2}
For a simple Morse function $f:M \rightarrow \mathbb{R}$ on a compact, connected, and orientable surface $M$,  the Reeb digraph $(R_f,\bar{f})$ is regarded as an SM-graph $(R_f,\bar{f},{R_f}_{\rm m})$ such that the degrees of vertices of the graph are $1$ or $3$, under the rule above.
\item \label{thm:1.3} From an SM-digraph $(G,c_G,G_{\rm m})$ such that the degrees of vertices of the graph are $1$ or $3$, we have a simple Morse function $f:M \rightarrow \mathbb{R}$ on a compact, connected, and orientable surface $M$ with $\bar{f}=c_G$
and ${R_f}_{\rm m}=G_{\rm m}$, under the rule for ${R_f}_{\rm m}$ above, and up to $\mathcal{R}$-equivalence and isomorphisms of the triplets.
\end{enumerate}
\end{Thm}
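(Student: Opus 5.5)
We prove the three parts of Theorem \ref{thm:1} separately. All three rest on the classical local description of level sets of a Morse function near a critical value, combined with the rule for vertices of $R_f$ fixed in the introduction.

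For (\ref{thm:1.1}), we use the fact recalled before Theorem \ref{thm:1}: since the critical value set is finite, $R_f$ is a graph whose vertices are the critical contours and the boundary components (\cite{saeki2}, \cite{izar}).
\begin{enumerate}
\item The function $\bar{f}$ is injective on each edge. Indeed, an edge corresponds to a family of regular contours parametrized by an interval of regular values, and $f$ has no critical point on $\partial M$.
\item Each sink has degree $1$. Near a local maximum of index $m$, the Morse lemma shows that nearby contours below are small spheres and nothing lies above. The critical contour containing the maximum is exactly that point, by simplicity, so exactly one edge enters.
\item Degrees are $1$, $2$ or $3$. Each critical contour contains exactly one critical point $p$, because critical values are distinct. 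The level set near $p$ changes by a single handle attachment. Passing an index-$i$ critical value changes the number of components of nearby contours by at most one: a $1$-handle can merge two components or keep one, and an $(m-1)$-handle dually can split one. Hence the total degree is $1$ (extrema), $2$ (no change in the number of components), or $3$ (merge or split).
\item The values at non-boundary vertices are distinct, since critical values are distinct.
\item If $\partial M\neq\emptyset$, then $f^{-1}(m_f)=\partial M$ by the definition of simple Morse functions. So the boundary vertices form ${\bar{f}}^{-1}(m_f)$. They are sources of degree $1$, by a collar argument, and all other vertex values exceed $m_f$.
\item If $M$ is closed, the third set is empty by the rule.
\end{enumerate}

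For (\ref{thm:1.2}), suppose $M$ is an orientable surface, and rule out degree-$2$ vertices at saddles. Near a saddle $p$, the critical contour is a figure-eight or a wedge arc configuration. Crossing the value changes one circle into two, two into one, or one into one. The last case happens only when the attaching $1$-handle is attached to the same circle with a twist, which produces a M\"obius band inside $M$. Orientability forbids this, so every saddle vertex has degree $3$. Extrema and boundary vertices have degree $1$.

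For (\ref{thm:1.3}), we realize a given SM-digraph with degrees $1$ or $3$ by explicit building blocks glued along circles.
\begin{enumerate}
\item To a source outside $G_{\rm m}$ we assign a disk with the height function of a minimum.
\item To a sink we assign a disk with the height function of a maximum.
\item To a source in $G_{\rm m}$ we assign a collar $S^1\times[m_{c_G},m_{c_G}+\epsilon]$ with the projection.
\item To a degree-$3$ vertex we assign a pair of pants with a standard Morse function having one saddle, arranged as a split or a merge according to the orientations of the three edges.
\item To each edge we assign a cylinder $S^1\times I$ with the projection, where $I$ is reparametrized so that heights match $c_G$.
\end{enumerate}
We then glue orientably; this is always possible by choosing orientation-compatible identifications of the boundary circles. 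Composing with level-preserving diffeomorphisms, the values of $f$ agree with $c_G$ at the vertices. Since each block's Reeb graph is the corresponding star of the vertex, the result satisfies $\bar{f}=c_G$ up to isomorphism, and ${R_f}_{\rm m}=G_{\rm m}$. Uniqueness up to $\mathcal{R}$-equivalence is not required; only existence is asserted.

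The main obstacle is the orientability argument in (\ref{thm:1.2}). It needs a careful local classification of how a $1$-handle attaches to a level circle, showing that the one-to-one case forces a non-orientable band. I would handle this via the standard fact that the band $f^{-1}([c-\epsilon,c+\epsilon])$ near the saddle is either a pair of pants or a punctured M\"obius band.
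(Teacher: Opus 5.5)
The paper proves nothing here. It presents the theorem as a summary of known results and defers to Kulinich and Mart\'{\i}nez-Alfaro--Meza-Sarmiento--Oliveira, which concern the classification and realization of simple Morse functions on surfaces by their Reeb graphs. Your handle-by-handle arguments for (1), (2) and the existence half of (3) are the standard ones and are essentially sound.

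The genuine gap is your remark that ``uniqueness up to $\mathcal{R}$-equivalence is not required.'' The triplet $(R_f,\bar{f},{R_f}_{\rm m})$ is itself an $\mathcal{R}$-invariant. So the clause ``up to $\mathcal{R}$-equivalence'' in (3) has content only as a uniqueness claim: a simple Morse function on a compact orientable surface is determined up to $\mathcal{R}$-equivalence by its SM-digraph. This is exactly what Theorem \ref{thm:4} needs for its one-to-one correspondence, because Theorem \ref{thm:2} only reduces TSC maps to $\mathcal{R}$-classes of page functions.

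To close this, take an isomorphism $\psi$ between the triplets of $f_1$ and $f_2$. Build $\phi$ with $f_2\circ\phi=f_1$ inductively over the slabs $f^{-1}([a_{j-1},a_j])$ cut out by regular values.
\begin{enumerate}
\item On cylinders, extend $\phi$ by the flow of $\nabla f/|\nabla f|^2$.
\item On the elementary piece (disk or pair of pants), take a level-preserving, orientation-preserving model diffeomorphism that respects the correspondence of boundary circles given by $\psi$. The $\pi$-rotation of a symmetric pair of pants swaps two legs if needed.
\item Correct this model in a collar of the lower boundary by $(x,t)\mapsto(\gamma_t(x),t)$.
\end{enumerate}
The correction works because both surfaces are oriented and $\phi$ is kept orientation-preserving. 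Then the mismatch on each lower circle is an orientation-preserving diffeomorphism of $S^1$, hence isotopic to the identity. This is where orientability genuinely enters (3), beyond making the glued surface orientable.

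There are also smaller points in (1) and (3).
\begin{itemize}
\item In (1), ``the number of components changes by at most one'' does not bound the degree. The degree is the \emph{sum} of the numbers of local components below and above the critical contour.
\item The missing fact is that the component $N$ of $f^{-1}([c-\epsilon,c+\epsilon])$ containing the critical contour is connected. Hence every component of its lower (upper) boundary meets the attaching (belt) sphere. This bounds the two sides by the number of components of $S^{i-1}$ and of $S^{m-i-1}$.
\item For $m=2$, $i=1$, both bounds are $2$, so you must still exclude $2+2$. Surgery on two points lying in different circles yields a single circle.
\item A sink's contour is a single local maximum because a contour containing a regular point has larger values arbitrarily close to it. That this contour is a point follows from the Morse lemma, not from simplicity.
\item In (3), your split/merge dichotomy for degree-$3$ vertices presupposes that sources, like sinks, have degree $1$. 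The M-digraph definition has to be read that way, since a degree-$3$ source is not realizable.
\end{itemize}
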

The following is an important SM-digraph.
\begin{Def}
	\label{def:2}
	Let $(G,c_G,G_{\rm m})$ be an SM-digraph with $G_{\rm m}$ being non-empty and with $m_{c_G}$ being the minimum of $c_G$. We can have an M-digraph $(G^{\ast},{c_G}^{\ast})$ admitting an isomorphism ${\phi}_G$ from the graph $G$ onto $G^{\ast}$ with the relation $m_{c_G}-{{c_G}^{\ast}}({\phi}_G(p))=c_G(p)-m_{c_G}$ 	for each point $p \in G$: note that the orientation is reversed. By identifying $G_{\rm m}$ and ${\phi}_G(G_{\rm m})$ by the isomorphism ${\phi}_G$, we have an SM-digraph whose vertex set is $(G \sqcup G^{\ast})-(G_{\rm m} \bigcup {\phi}_G(G_{\rm m}))$. Furthermore, set the function on the graph as the continuous function canonically defined from $c_G$ and ${c_G}^{\ast}$ and the third set as the empty set. This is the {\it double} of $(G,c_G,G_{\rm m})$.
	\end{Def}
\subsection{Classifications of round fold maps by Saeki with the author.}
We summarize important facts and arguments from \cite{kitazawasaeki}, through Theorems \ref{thm:2}, \ref{thm:3}, and \ref{thm:4}.
\begin{Thm}[\cite{kitazawasaeki}]
\label{thm:2}
In the case the dimension $m \geq 5$, a round fold map on an $m$-dimensional closed, connected and orientable manifold is uniquely determined from the $\mathcal{R}$-equivalence class of its page function, up to $\mathcal{A}$-equivalence. On the other hand, from a Morse function on a compact, connected and orientable surface, we have a round fold map whose page function is $\mathcal{R}$-equivalent to it, uniquely up to $\mathcal{A}$-equivalence.  
\end{Thm}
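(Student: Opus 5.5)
This is the classification theorem from \cite{kitazawasaeki}; the paper only cites it, but a proof would run as follows. The guiding idea is that a round fold map $f:M \rightarrow {\mathbb{R}}^n$ is controlled by a single page function. Fix $x_0 \in S^{n-1}$ and set $F_{\rm P}:=f^{-1}(\{tx_0 \mid t \geq \frac{1}{2}\})$. Since $f(S(f))$ consists of concentric spheres and $f$ is an embedding on $S(f)$, the map $f$ is a submersion near $f^{-1}(\{\|x\| \leq \frac{1}{2}\})$. We decompose $M$ accordingly:
\[
M=E_0 \cup E_1, \qquad E_0:=f^{-1}(D^n_{1/2}), \qquad E_1:=f^{-1}(\{\|x\| \geq \tfrac{1}{2}\}).
\]
Composing $f$ with the radial projection $x \mapsto x/\|x\|$ on $E_1$ is a proper submersion onto $S^{n-1}$ that respects the level structure. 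By Ehresmann's theorem, $E_1$ is therefore a smooth bundle over $S^{n-1}$ with fiber $F_{\rm P}$, and its structure group reduces to diffeomorphisms of $F_{\rm P}$ that commute with $\tilde{f}_{{\rm P},x_0}$. The piece $E_0$ is a trivial bundle $D^n \times F$, where $F=\partial F_{\rm P}$ is the regular fiber. Up to $\mathcal{A}$-equivalence, $f$ is determined by the page function, by the clutching class of the bundle in $\pi_{n-2}$ of this structure group, and by the gluing of $E_0$ along $\partial E_0 = S^{n-1} \times F$.

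For the uniqueness statement, the key step is to show that, for a Morse function on a compact orientable surface, the relevant automorphism groups are so small in the relevant degree that every clutching is trivial. For $n \geq 2$ the map $f$ has codimension $m-n$, and the hypothesis $m \geq 5$ is exactly what is needed here: it lets us use the description of the structure group from \cite{kitazawasaeki}. Near each critical contour the group is built from diffeomorphism groups of surfaces and circles, and the components of these groups have contractible or $K(\pi,1)$ homotopy type: Earle--Eells, Smale, and the Hatcher results for orientation-preserving surface diffeomorphisms. Hence the classifying space of the structure group is aspherical up to a finite group, and bundles over $S^{n-1}$ with $n-1 \geq 2$ are classified by $\pi_{n-2}$ of the group. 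We would argue that this group is trivial, or acts trivially, once $n-1 \geq 2$. The gluing of $E_0$ is handled in the same way using $\pi_{n-1}$ of ${\rm Diff}(F)$, together with the fact that the gluing diffeomorphism extends over $D^n$. Orientability enters because it rules out monodromy that reverses orientation. Putting these together, two round fold maps with $\mathcal{R}$-equivalent page functions have isomorphic decompositions, which gives the required $\mathcal{A}$-equivalence.

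For existence, we take the product $S^{n-1} \times F_{\rm P}$ with the map $(x,p) \mapsto \tilde{f}(p)\,x$, after shifting so that the minimum of $\tilde{f}$ is $\frac{1}{2}$. We then glue in $D^n \times \partial F_{\rm P}$ by the identity. The result is a fold map whose singular value set is a family of concentric spheres, and its page function is the given one. The main obstacle is the homotopy-theoretic computation of the structure group in the uniqueness step, specifically the vanishing of the relevant $\pi_{n-2}$, and I would attack it contour by contour using the surface classification.
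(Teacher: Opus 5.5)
\textbf{There is a genuine gap, in the uniqueness step.} The paper gives no proof of this theorem. It cites Kitazawa--Saeki and records the outcome: every such map is the TSC map, namely the product $\tilde{f}_{{\rm P},x_0}\times\mathrm{id}_{S^{m-2}}$, glued with $D^{m-1}\times C$. Your framework is the right one and leads to exactly this normal form: $E_1$ is a bundle with fibre $F_{\rm P}$ and structure group $G=\mathrm{Diff}(F_{\rm P},\tilde{f}_{{\rm P},x_0})$, $E_0\cong D^{n}\times F$, and your existence argument is the trivial spinning construction. But the uniqueness step, which carries all the content, is left undone, and the mechanism you propose for it is wrong.

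In the theorem the target is $\mathbb{R}^{m-1}$, so $n=m-1$. The base of $E_1$ is $S^{m-2}$, and the clutching invariant lies in $\pi_{m-3}(G)$ modulo $\pi_0(G)$. Your claim that $BG$ is aspherical is false. When a component of $G$ is homotopy equivalent to $S^1$, one gets $\pi_2(BG)\cong\pi_1(G)\cong\mathbb{Z}$. This happens for the height function on $S^2$, whose stabilizer has identity component $\simeq SO(2)$. Consequently your vanishing ``once $n-1\geq 2$'' fails at $n-1=2$, i.e.\ $m=4$. There, odd clutchings by rotations give the non-trivial $S^2$-bundle over $S^2$. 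It carries a round fold map into $\mathbb{R}^3$ with the same page function as the TSC map on $S^2\times S^2$ (Theorem \ref{thm:3} (\ref{thm:3.4}) with Theorem \ref{thm:4}). So your argument as written would prove a false statement for $m=4$. The hypothesis $m\geq 5$ never genuinely enters, and saying it ``lets us use the description of the structure group'' is circular.

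The missing input concerns the stabilizer of the page function, not full surface diffeomorphism groups; Earle--Eells, Smale and Hatcher say nothing directly about $G$. What you need is that every path component of $G$ is contractible or homotopy equivalent to $S^1$. This is Maksymenko's theorem on stabilizers of Morse functions on surfaces. Alternatively, get it by direct analysis of the elementary pieces (a disc with an extremum, a pair of pants with a saddle, collars) glued along regular circles. Then $\pi_k(G)=0$ for $k\geq 2$, and $\pi_{m-3}(G)=0$ precisely because $m-3\geq 2$; this is where $m\geq 5$ is used.

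With that in hand the argument closes as follows.
\begin{enumerate}
\item The bundle is trivial, so $E_1\cong S^{m-2}\times F_{\rm P}$ with $f(x,p)=\tilde{f}_{{\rm P},x_0}(p)\,x$.
\item The gluing of $E_0$ is trivial. The components of $\mathrm{Diff}(F)$, for $F$ a disjoint union of circles, are homotopy equivalent to tori, and $m-2\geq 2$. Hence every map $S^{m-2}\to\mathrm{Diff}(F)$ is homotopic to a constant $c$.
\item A constant gluing $\mathrm{id}\times c$ is absorbed by the fibrewise diffeomorphism $\mathrm{id}\times c$ of $D^{m-1}\times F$.
\end{enumerate}

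Two smaller corrections. First, $S^{m-2}$ is simply connected, so there is no monodromy for orientability to rule out; orientability of $M$ simply forces $F_{\rm P}$ to be orientable. Second, in the existence part the Morse function must have distinct critical values, and its boundary (if non-empty) must be its minimum level. Otherwise $f|_{S(f)}$ is not an embedding and the result is not a round fold map.
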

\begin{Thm}[\cite{kitazawasaeki}]
\label{thm:3}
Let $m \geq 4$ be an integer. For an $m$-dimensional closed, connected and orientable manifold $M$, the following hold.
\begin{enumerate}

\item \label{thm:3.1} Let  $l \geq 0$ be an integer. The $m$-dimensional manifold $M$ admits a round fold map $f:M \rightarrow {\mathbb{R}}^{m-1}$ with $f(M)$ being homeomorphic to $D^{m-1}$ and with the Reeb graph of its page function being of 1st Betti number $l$ if and only if $M$ is diffeomorphic to a connected sum ${\sharp}_{j=1}^l (S^{1} \times S^{m-1})$.
\item \label{thm:3.2} Let $m>4$ and in this case, the $m$-dimensional manifold $M$ admits a round fold map $f:M \rightarrow {\mathbb{R}}^{m-1}$ with $f(M)$ being homeomorphic to $S^{m-2} \times D^1$ and with the Reeb graph of its page function being of 1st Betti number $l$ if and only if $M$ is diffeomorphic to $S^{m-1} \times S$ with $S$ being a closed, connected and orientable surface of genus $l$.
\item \label{thm:3.3} Let $m=4$ and in this case, for the $m$-dimensional manifold, $M$ admits a round fold map $f:M \rightarrow {\mathbb{R}}^{m-1}$ with $f(M)$ being homeomorphic to $S^{m-2} \times D^1$ and with the Reeb graph of its page function being of 1st Betti number $l$ and having at least $2$ edges if and only if $M$ is diffeomorphic to $S^{m-1} \times S$ with $S$ being a closed, connected and orientable surface of genus $l$.
\item \label{thm:3.4} Let $m=4$ and in this case, for the $m$-dimensional manifold, $M$ admits a round fold map $f:M \rightarrow {\mathbb{R}}^{m-1}$ with $f(M)$ being homeomorphic to $S^{m-2} \times D^1$ and with the Reeb graph of its page function having exactly one edge if and only if $M$ is diffeomorphic to either $S^2 \times S^2$ or a non-trivial smooth bundle over $S^2$ whose fiber is diffeomorphic to $S^2$.
\end{enumerate}
\end{Thm}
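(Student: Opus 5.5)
The plan is to read $M$ off from its decomposition over the image of $f$, using the fact that a round fold map is, away from the centre of the concentric spheres, a bundle over $S^{m-2}$ whose fiber is the page. Put $f$ in the normal form of the definition, so that $f(S(f))$ is the union of the concentric spheres of radii $1,\dots,l$. The preimage of a ray from the centre is a compact surface $P$ (the page), and $\tilde{f}_{{\rm P},x_0}$ is a Morse function on it. Over $\{x \mid \frac{1}{2}\leq |x|\}$, the map $f$ is a smooth bundle over $S^{m-2}$ with fiber $P$. Its structure group can be reduced to the group of diffeomorphisms of $P$ preserving $\tilde{f}_{{\rm P},x_0}$; this reduction is the standard argument of \cite{kitazawasaeki}, obtained by following the page functions along the sphere of directions. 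I would therefore first show that the identity component of this group of page-function-preserving diffeomorphisms is contractible, except when the Reeb graph of the page function is a single edge. In that exceptional case it deformation retracts to the rotations $SO(2)$ (or $SO(3)$) of the page. Since $S^{m-2}$ is simply connected for $m\geq 4$, only $\pi_{m-3}$ of this identity component matters. It vanishes in every case except $m=4$ with a single-edge Reeb graph, where $\pi_1(SO(3))=\mathbb{Z}/2$ can occur.

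\textbf{Case $f(M)\cong D^{m-1}$ (item (\ref{thm:3.1})).} The centre of the image lies in the image and is a regular value, so $f^{-1}$ of the small disk $\{|x|\leq \frac{1}{2}\}$ is $D^{m-1}\times \partial P$. Here $\partial P$ is the regular fiber, a disjoint union of circles, and $P$ is connected with nonempty boundary. The bundle over $S^{m-2}$ is trivial by the previous paragraph, so after smoothing corners
\[
M \cong S^{m-2}\times P \cup D^{m-1}\times \partial P \cong \partial (D^{m-1}\times P).
\]
A compact orientable surface with nonempty boundary is a disk with $1$-handles. Hence $D^{m-1}\times P$ is a boundary connected sum of copies of $S^1\times D^{m}$, and its boundary is ${\sharp} (S^1\times S^{m-1})$. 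The number of summands equals ${\rm rank}\,H_1(M)$. I would then check that this number matches the invariant of the page function named in the statement; this bookkeeping on genus versus boundary components should be verified carefully. For the converse, I would build a Morse function on $P$ with the required Reeb graph and invoke Theorem \ref{thm:2}, together with the explicit construction of \cite{kitazawasaeki} for $m=4$.

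\textbf{Case $f(M)\cong S^{m-2}\times D^1$ (items (\ref{thm:3.2})--(\ref{thm:3.4})).} The innermost sphere of the singular value set now bounds the image from inside and consists of index-$0$ folds. So $P$ is a closed orientable surface $S$, whose genus is the first Betti number of the Reeb graph by Theorem \ref{thm:1}, and $M$ is an $S$-bundle over $S^{m-2}$. (I read the $S^{m-1}$ in the statement as the intended $S^{m-2}$.) By the reduction in the first paragraph, the bundle is trivial when $m>4$, and also when $m=4$ with at least two edges, because then the page-function-preserving group is essentially discrete. This gives $M\cong S^{m-2}\times S$. When $m=4$ and the Reeb graph has exactly one edge, $S=S^2$ and the page function is the height function. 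The structure group reduces to $O(2)\subset O(3)$, so $M$ is one of the two $S^2$-bundles over $S^2$ classified by $\pi_1(SO(3))$, and both are realized. The converses follow by taking products with the identity (trivial bundles) or the nontrivial clutching, and checking that the composite is a round fold map.

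The main obstacle I expect is the structure-group reduction together with the homotopy type of the group of page-function-preserving diffeomorphisms. The first thing I would try is to identify this group, up to homotopy, with the automorphism group of the Reeb graph extended by fiberwise rotations of regular contours. Rotations of a circle contour can be untwisted unless the contour bounds a disk on both sides, which is exactly the single-edge case. This approach also explains why $m=4$ is exceptional.
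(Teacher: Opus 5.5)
\textbf{Verdict: there is a genuine gap in item (1), and item (1) as printed is in fact false; items (2)--(4) go through.} The paper gives no proof of Theorem~\ref{thm:3}; it is quoted from Kitazawa--Saeki. Your route is the standard one: the page bundle over $S^{m-2}$, reduction of its structure group to the stabilizer of the page function, then the homotopy type of that stabilizer. For items (2)--(4) it works, and you are right that $S^{m-1}\times S$ must be read as $S^{m-2}\times S$.

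\textbf{The gap in item (1).} The step you postponed as bookkeeping fails, and not for lack of care. Your argument gives $M\cong\partial(D^{m-1}\times P)\cong\sharp_{k}(S^1\times S^{m-1})$ with $k={\rm rank}\,H_1(P)=2g+b-1$. Here $g$ is the genus of $P$ and $b\geq 1$ is the number of circles in $\partial P$. The Reeb graph of the page function, however, has first Betti number $g$. Capping each boundary circle with a disk carrying a minimum does not change the graph and produces a closed surface of genus $g$. Concretely, an annulus page has a tree as its Reeb graph, yet its trivial spinning gives $S^1\times S^{m-1}$, not $S^m$. And for $l\geq 1$ the two numbers can only agree if $b=1-l\leq 0$. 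So item (1) as printed fails in both directions, and no proof can close this step. What your argument does establish is item (1) with $l$ equal to the first Betti number of the page $P$. Equivalently, $l$ is the first Betti number of the double of the Reeb graph in the sense of Definition~\ref{def:2}, which is $2g+(b-1)$. You should state and prove that corrected version rather than leave the matching open.

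\textbf{Two further points.} First, item (1) contains a case your opening paragraph does not cover: $m=4$ and $P=D^2$ (single-edge Reeb graph, no saddle). There the identity component of the stabilizer is homotopy equivalent to $SO(2)$. Since $D^2$-bundles over $S^2$ with rotation structure group have arbitrary Euler number, ``trivial by the previous paragraph'' is not justified. What forces triviality is the inner piece. The restriction of the bundle $f^{-1}(\{|x|\geq\frac{1}{2}\})$ to $\partial P$ is the restriction of the trivial bundle $f^{-1}(\{|x|\leq\frac{1}{2}\})\cong D^{m-1}\times\partial P$, so it is identified over $S^{m-2}$ with $S^{m-2}\times\partial P$. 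For $P=D^2$, the Euler number of this circle bundle is exactly the clutching class, so it vanishes. To reach $\partial(D^{m-1}\times P)$ you also need the gluing map, a map $S^{m-2}\to\mathrm{Diff}(\partial P)$, to be homotopic to a constant. This holds because $\pi_{m-2}(O(2))=0$.

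Second, the homotopy-theoretic input you plan to derive by hand is Maksymenko's theorem on stabilizers of Morse functions on surfaces. It says the identity component is contractible when there is a saddle and homotopy equivalent to a circle otherwise. In the closed single-edge case the relevant group is $SO(2)$, with $\pi_1=\mathbb{Z}$, not $SO(3)$. Only its image in $\pi_1(SO(3))=\mathbb{Z}/2$ affects the diffeomorphism type of $M$, which is why exactly two manifolds occur in item (4).
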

A round fold map $f:M \rightarrow {\mathbb{R}}^{m-1}$ in Theorem \ref{thm:2} is represented as follows. This round fold map is the product map of a page function $\tilde{f_{{\rm P},x_0}}$ and the identity map on $S^{m-2}$ up to $\mathcal{A}$-equivalence in the case where $f(M)$ is homeomorphic to $S^{m-2} \times D^1$. In the case where $f(M)$ is homeomorphic to $D^{m-1}$, the round fold map is, up to $\mathcal{A}$-equivalence, a smooth map obtained by gluing the product map of a page function $\tilde{f_{{\rm P},x_0}}$ and the identity map on $S^{m-2}$ and the product bundle $D^{m-1} \times C$ with $C$ being non-empty and a disjoint union of copies of $S^1$ along the boundaries by the product map of the identity map on the base space $S^{m-2}$ and that on the fiber $C$, where the identifications are chosen naturally for these manifolds of base spaces and fibers.

This construction is of course applied for constructing a round fold map seen as a simplest one into the Euclidean space of an arbitrary dimension $n \geq 2$ whose page function is a prescribed Morse function with its critical value set being scaled suitably on an ($m-n$)-dimensional compact manifold with no boundary or with its boundary being the preimage of its minimum. The resulting round fold map is of course unique up to $\mathcal{A}$-equivalence. This construction is called {\it trivial spinning construction}. We use this name, respecting \cite{kitazawasaeki} with \cite{kitazawa2}, for example. A round fold map constructed in such a way is a {\it trivial-spinning-constructed} map or a {\it TSC} map.
We can improve several theorems presented above.
\begin{Thm}
\label{thm:4}
In Theorem \ref{thm:2} and in Theorem \ref{thm:3} {\rm (}\ref{thm:3.1}, \ref{thm:3.2}, \ref{thm:3.3}{\rm )}, each round fold map is a TSC map. In Theorem \ref{thm:3} {\rm (}\ref{thm:3.4}{\rm )}, the manifold of the domain of a TSC map is diffeomorphic to $S^2 \times S^2$. Together with Theorem \ref{thm:1} {\rm (}\ref{thm:1.2}{\rm )}, these TSC maps in the present situation and the SM-digraphs $(G,c_G,G_{\rm m})$ whose vertices are always of degree $1$ or $3$ correspond one-to-one, where $(G,c_G,G_{\rm m})$ is for a page function of TSC map. 
\end{Thm}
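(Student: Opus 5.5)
The plan is to reduce every assertion to two facts. First, a round fold map is controlled by its page function, through Theorem \ref{thm:2} and the structure description following Theorem \ref{thm:3}. Second, simple Morse functions on compact orientable surfaces are controlled by their Reeb digraphs, through Theorem \ref{thm:1}.

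\emph{Step 1 (Theorem \ref{thm:2}).} Let $f$ be a round fold map on an $m$-dimensional closed, connected, orientable manifold with $m \geq 5$, and let $\tilde{f_{{\rm P},x_0}}$ be its page function. Applying trivial spinning construction to $\tilde{f_{{\rm P},x_0}}$, with its critical values rescaled to $1,\cdots,l$, gives a TSC map whose page function is $\mathcal{R}$-equivalent to $\tilde{f_{{\rm P},x_0}}$. The uniqueness part of Theorem \ref{thm:2} then says that $f$ is $\mathcal{A}$-equivalent to this TSC map. The existence part of Theorem \ref{thm:2} is realized by the same construction, so that every map arising there is TSC.

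\emph{Step 2 (Theorem \ref{thm:3} (\ref{thm:3.1}), (\ref{thm:3.2}), (\ref{thm:3.3})).} For the "if" directions I take an SM-digraph $(G,c_G,G_{\rm m})$ with vertices of degree $1$ or $3$ and 1st Betti number $l$. By Theorem \ref{thm:1} (\ref{thm:1.3}) it is realized by a simple Morse function on a compact orientable surface $F$, and I spin it trivially. The domain is then identified case by case.
\begin{itemize}
\item If $G_{\rm m}$ is empty, $F$ is closed of genus $l$ and the domain is $S^{m-2}\times F$.
\item If $G_{\rm m}$ is non-empty, the domain is $(S^{m-2}\times F)\cup (D^{m-1}\times \partial F)$. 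This is the boundary of $D^{m-1}\times F$ after smoothing corners. Since $F$ deformation retracts onto a wedge of circles, $D^{m-1}\times F$ is a boundary connected sum of copies of $S^1\times D^{m}$, one for each generator of $\pi_1(F)$. Its boundary is then ${\sharp}(S^1\times S^{m-1})$. By Theorem \ref{thm:1} (\ref{thm:1.1}) the number of summands equals the 1st Betti number of the Reeb graph. Here I would check that the Euler characteristic count, together with the number of components of $\partial F$, matches $l$.
\end{itemize}
For the "only if" directions with $m \geq 5$, Step 1 already applies. For $m=4$ in (\ref{thm:3.3}), I would use that the structure description after Theorem \ref{thm:3} (taken from \cite{kitazawasaeki}) exhibits the map as the product of the page function with ${\rm id}_{S^2}$ whenever $f(M)$ is homeomorphic to $S^{2}\times D^1$. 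Hence the map is TSC.

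\emph{Step 3 (Theorem \ref{thm:3} (\ref{thm:3.4})).} The Reeb graph has exactly one edge and $f(M)$ is homeomorphic to $S^2\times D^1$, so the page function is a Morse function on a closed surface with exactly two critical points. Its surface is therefore $S^2$, and the TSC map is ${\pi}_{3,1}{\mid}_{S^2}\times {\rm id}_{S^2}$ up to $\mathcal{A}$-equivalence, with domain $S^2\times S^2$. The non-trivial bundle in (\ref{thm:3.4}) therefore cannot carry a TSC map.

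\emph{Step 4 (the one-to-one correspondence).} I would define a map from TSC maps to SM-digraphs by sending a TSC map to the triplet of its page function, given by Theorem \ref{thm:1} (\ref{thm:1.1}), (\ref{thm:1.2}). It is surjective by Theorem \ref{thm:1} (\ref{thm:1.3}) followed by spinning. For injectivity I need that a simple Morse function on a compact orientable surface is determined up to $\mathcal{R}$-equivalence by its SM-digraph, with critical values matched after rescaling. I would prove this by induction over the critical values. Each regular level is a disjoint union of circles, and each critical level adds a $1$-handle or $0$/$2$-handle whose attaching data is read off from the degree-$3$ or degree-$1$ vertex. Orientability removes the ambiguity in attaching $1$-handles, and the diffeomorphism is extended level by level using that orientation-preserving diffeomorphisms of circles are isotopic to the identity. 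Since TSC maps are determined by their page functions, injectivity follows.

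I expect the main obstacle to be this uniqueness of surface Morse functions from their Reeb digraphs, together with the $m=4$ "only if" part of (\ref{thm:3.3}). The latter depends on the structure theorem of \cite{kitazawasaeki} rather than on Theorem \ref{thm:2}, which needs $m\geq 5$.
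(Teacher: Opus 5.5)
Your argument for $m\ge 5$ is in line with what the paper relies on: you spin the page function trivially and invoke the uniqueness in Theorem~\ref{thm:2}. So are your use of Reeb's theorem for Theorem~\ref{thm:3}~(\ref{thm:3.4}) and your use of Theorem~\ref{thm:1} for the correspondence. The paper gives no separate proof; it rests on the structure description quoted from \cite{kitazawasaeki}.

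\emph{The genuine gap is $m=4$}, which occurs in Theorem~\ref{thm:3}~(\ref{thm:3.1}) as well as in (\ref{thm:3.3}). You skip $m=4$ in (\ref{thm:3.1}) altogether. For (\ref{thm:3.3}) you appeal to the structure description ``whenever $f(M)$ is homeomorphic to $S^2\times D^1$''. The paper states that description only for the maps of Theorem~\ref{thm:2}, i.e.\ for $m\ge 5$. In the form you use it, it is false for $m=4$. By Theorem~\ref{thm:3}~(\ref{thm:3.4}), the non-trivial $S^2$-bundle over $S^2$ carries a round fold map onto $S^2\times D^1$. If that map were a product of its page function with ${\rm id}_{S^2}$, your own Step~3 would make its domain $S^2\times S^2$. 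Tellingly, your argument never uses the hypothesis ``at least $2$ edges'', which is exactly what separates (\ref{thm:3.3}) from (\ref{thm:3.4}).

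The missing idea is a clutching argument.
\begin{enumerate}
\item Put $f$ in normal form. Over $\{|x|\ge 1/2\}$, composing $f$ with $x\mapsto x/|x|$ gives a smooth bundle over $S^{m-2}$. Its fiber is the page $F$, and its structure group is ${\rm Diff}(F,g)$, the diffeomorphisms preserving the page function $g$.
\item Such a bundle is classified by an element of $\pi_{m-3}({\rm Diff}(F,g))$. For $m\ge 4$ this only sees the identity component.
\item Suppose $g$ has a saddle. In the closed case this is guaranteed by ``at least $2$ edges''; in the bounded case it is automatic unless $F=D^2$. Then the identity component is contractible: level-preserving germs at a saddle are rigid, and rotations along an edge ending at a saddle form a contractible path space. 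So the bundle is trivial also for $m=4$.
\item Suppose $g$ has no saddle. Then ${\rm Diff}(F,g)\simeq O(2)$, whose $\pi_1$ is $\mathbb{Z}$.
  \begin{itemize}
  \item For $F=S^2$ this produces both bundles of (\ref{thm:3.4}).
  \item For $F=D^2$, which is the case $m=4$ of (\ref{thm:3.1}), use that $f^{-1}(\{|x|\le 1/2\})\cong D^3\times C$ by Ehresmann's theorem. Hence the boundary circle bundle is trivial, and the clutching class vanishes.
  \end{itemize}
\item The final gluing with $D^{m-1}\times C$ is harmless, since $\pi_{m-2}({\rm Diff}(C))=0$ for $m\ge 4$.
\end{enumerate}

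Separately, the count you defer in Step~2 would fail. Let $\gamma$ be the genus of $F$ and $b$ its number of boundary circles. Then $\partial(D^{m-1}\times F)$ has $b_1(F)=2\gamma+b-1$ summands, whereas the Reeb graph has first Betti number $\gamma$. For example, an annulus with one saddle and one maximum spins to $S^1\times S^{m-1}$, although its Reeb graph is a tree. Theorem~\ref{thm:4} does not need this count, so drop it rather than try to verify it.
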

Note that for round fold maps $f:M \rightarrow {\mathbb{R}}^{m-1}$ on $m$-dimensional closed, connected and (possibly) non-orientable manifolds with $m \geq 4$, similar theorems have been shown in \cite{kitazawasaeki}. However we omit related exposition, for simplicity.
\subsection{Special generic maps in the real algebraic category.}
We review a main ingredient of \cite{kitazawa5} in a self-contained way. For real algebraic arguments in the present paper, see \cite{kollar} and see also the textbook \cite{bochnakcosteroy}.
\begin{Thm}
\label{thm:5}
Let $D \subset {\mathbb{R}}^n$ be a non-empty open set of ${\mathbb{R}}^n$ such that for the closure $\overline{D}$ considered there, $\overline{D}-D$ is the zero set of a real polynomial function $f_{D,S}$, that the rank of the differential of $f_{D,S}$ there is always $1$, and that $D=\{x \in {\mathbb{R}}^n \mid \ f_{D,S}(x)>0\}$. In this situation, the zero set $M_{D,S}:=\{(x,{(y_j)}_{j=1}^{m-n+1}) \in {\mathbb{R}}^n \times {\mathbb{R}}^{m-n+1} \mid f_{D,S}(x)-{\Sigma}_{j=1}^{m-n+1} {y_j}^2=0 \}$ with $m$ being an integer satisfying $m \geq n$ is an $m$-dimensional smooth submanifold with no boundary of ${\mathbb{R}}^{m+1}$. Furthermore, the restriction of ${\pi}_{m+1,n}$ there is a special generic map onto  $\overline{D} \subset {\mathbb{R}}^n$ whose singular set is $\{(x,0) \in ({\mathbb{R}}^n \times {\mathbb{R}}^{m-n+1}) \bigcap M_{D,S}\}$.
\end{Thm}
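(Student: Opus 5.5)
The plan is to apply the regular value theorem to the single polynomial
\[
F(x,y):=f_{D,S}(x)-{\Sigma}_{j=1}^{m-n+1} {y_j}^2
\]
on ${\mathbb{R}}^n \times {\mathbb{R}}^{m-n+1}={\mathbb{R}}^{m+1}$. After that, I would analyze the differential of ${\pi}_{m+1,n}$ restricted to $M_{D,S}=F^{-1}(0)$, and finally write down an explicit local normal form at the points where the rank drops.

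\emph{Step 1: $M_{D,S}$ is a smooth hypersurface.} The gradient of $F$ is $(\nabla f_{D,S}(x),-2y)$. Suppose it vanishes at a point $(x,y)$ of $M_{D,S}$. Then $y=0$, so $f_{D,S}(x)=0$, and hence $x \in \overline{D}-D$ by the assumption that this set is exactly the zero set of $f_{D,S}$. At such a point the differential of $f_{D,S}$ has rank $1$ by hypothesis, so $\nabla f_{D,S}(x) \neq 0$, which is a contradiction. Thus $0$ is a regular value of $F$, and $M_{D,S}$ is an $m$-dimensional smooth submanifold of ${\mathbb{R}}^{m+1}$ with no boundary. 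It is also real algebraic, being the zero set of $F$.

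\emph{Step 2: the image is $\overline{D}$.} On $M_{D,S}$ we have $f_{D,S}(x)={\Sigma}_j {y_j}^2 \geq 0$. Hence $x \in D=\{f_{D,S}>0\}$ or $x \in \{f_{D,S}=0\}=\overline{D}-D$, so the image lies in $\overline{D}$. Conversely, for $x \in \overline{D}$ we have $f_{D,S}(x)\geq 0$, and taking $y=(\sqrt{f_{D,S}(x)},0,\cdots,0)$ gives a preimage.

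\emph{Step 3: the singular set.} The tangent space $T_{(x,y)}M_{D,S}$ is the kernel of $dF$. The kernel of $d{\pi}_{m+1,n}$ restricted to this tangent space consists of the vectors $(0,w)$ with $y \cdot w=0$. If $y \neq 0$ this kernel has dimension $m-n$, so the rank is $n$ and the point is regular. If $y=0$ the kernel has dimension $m-n+1$, so the rank is $n-1$ and the point is singular. Hence $S({\pi}_{m+1,n}{\mid}_{M_{D,S}})=\{(x,0)\} \cap M_{D,S}$, as claimed.

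\emph{Step 4: every singular point is a fold of index $0$.} This is the only step needing some care. Take a singular point $(x_0,0)$. Since $\nabla f_{D,S}(x_0)\neq 0$, the inverse function theorem gives local coordinates $(u_1,\cdots,u_n)$ of ${\mathbb{R}}^n$ around $x_0$ with $u_n=f_{D,S}$. In the coordinates $(u,y)$ of ${\mathbb{R}}^{m+1}$, the manifold $M_{D,S}$ is locally the graph $u_n={\Sigma}_j {y_j}^2$. It is therefore parametrized by $(u_1,\cdots,u_{n-1},y_1,\cdots,y_{m-n+1})$, and in these coordinates the map becomes
\[
(u_1,\cdots,u_{n-1},y)\mapsto (u_1,\cdots,u_{n-1},{\Sigma}_{j=1}^{m-n+1}{y_j}^2),
\]
after composing with the coordinate change on the target. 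This is exactly the fold normal form with $i(p)=0$. So the restriction of ${\pi}_{m+1,n}$ is a special generic map onto $\overline{D}$.
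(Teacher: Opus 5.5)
Your proposal is correct and follows the paper's route. The paper also uses the implicit function theorem, with a case split: over $D$ some $\partial F/\partial y_j \neq 0$, and over $\overline{D}-D$ some $\partial F/\partial x_i \neq 0$, which is your nonvanishing-gradient argument. It then appeals to local coordinates around the points $(x,0)$, and your Steps 2--4 (especially the chart with $u_n=f_{D,S}$ giving the index-$0$ fold normal form) fill in details the paper leaves to ``fundamental general arguments.''
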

\begin{proof}
By implicit function theorem, we prove that the zero set $M_{D,s}$ is a smooth submanifold of ${\mathbb{R}}^{m+1}$. In the case $\{(x_0,y_0) \in (D \times {\mathbb{R}}^{m-n+1}) \bigcap M_{D,S}\}$, the value of the partial derivative of the function $f_{D,S}(x)-{\Sigma}_{j=1}^{m-n+1} {y_j}^2$ by some variable $y_j$ is not $0$ at the point $(x_0,y_0)$. In the case $\{(x_0,y_0) \in ((\overline{D}-D) \times {\mathbb{R}}^{m-n+1}) \bigcap M_{D,S}\}$, the value of the partial derivative of the function $f_{D,S}(x)-{\Sigma}_{j=1}^{m-n+1} {y_j}^2$ by some variable $x_i$ from the coordinate $x=(x_1,\cdots x_n)$ is not $0$ at the point $(x_0,y_0)=(x_0,0)$. From this, we can apply implicit function theorem to show that $M_{D,S}$ is an $m$-dimensional smooth submanifold with no boundary of ${\mathbb{R}}^{m+1}$.

By fundamental general arguments on the zero sets of real polynomial maps and local coordinates around $\{(x,0) \in ({\mathbb{R}}^n \times {\mathbb{R}}^{m-n+1}) \bigcap M_{D,S}\}$, the restriction of ${\pi}_{m+1,n}$ there is shown to be a special generic map whose singular set is $\{(x,0) \in ({\mathbb{R}}^n \times {\mathbb{R}}^{m-n+1}) \bigcap M_{D,S}\}$. For the arguments on the local structure of the manifolds and maps here, remember implicit function theorem. Related to this, see also \cite[Discussion 14, Claim 14.2]{kollar}, for example.
This completes the proof.
\end{proof}
\subsection{Approximation of smooth functions and maps by real polynomial functions and maps.}

In singularity theory, the {\it $C^k$ topology} on the space of all smooth maps between two smooth manifolds is an important topology, where $k$ is a non-negative integer or $\infty$ and for the class of smoothness of maps between manifolds.
We omit rigorous definitions and arguments related to such a topology. Consult \cite{golubitskyguillemin} again for this.
This topology can be naturally defined in the following way, roughly speaking: two smooth maps are close if their values at each point on each compact set and their $j$-th derivatives at each point on the compact set are close for $1 \leq j \leq k$. The {\it $C^{\infty}$ topology} is (defined by) the union of (all open sets for) $C^k$ topologies for all $k \geq 0$. 
The {\it Whitney $C^k$ topology} with $k=\infty$ is shortly presented as a topology on a group of diffeomorphisms, before. This is a stronger variant, where "each compact set" is replaced by the manifold of the domain: in the case the manifold of the domain is compact, the topology is same as the $C^k$ topology.

It is important in our paper that, as surveyed in \cite[Proposition 1.3.7]{lellis}, an article on history and advanced studies of real algebraic geometry, for example, smooth maps between Euclidean spaces are approximated by real polynomial maps ({\it real algebraic} maps) on a compact set, in the $C^k$ topology, for any non-negative given integer $k$.
\subsection{Algebraic domains and Poicar\'e-Reeb graphs of them and related theory established by Bodin, Popescu-Pampu, and Sorea.}
We review \cite{bodinpopescupampusorea} with additional original exposition by the author.

An {\it algebraic domain} $D$ in ${\mathbb{R}}^2$ is a bounded non-empty open set there surrounded by mutually disjoint real algebraic curves. Here, a {\it real algebraic curve} $C$ means a connected component of the zero set of a real polynomial function $f_C$ such that the rank of the differential of $f_C$ on $C$ is always $1$: in short $C$ is {\it non-singular}.
We can consider the closure $\overline{D}$ of $D$ in ${\mathbb{R}}^2$ and the restriction of ${\pi}_{2,1}$ there. We can define the quotient space for the restriction ${\pi}_{2,1} {\mid}_{\overline{D}}$ as in the Reeb graph. We can give the quotient space the structure of a graph in the following way. 
\begin{itemize}
\item Each element of the quotient space represents a contour of ${\pi}_{2,1} {\mid}_{\overline{D}}$.

\item  An element of the quotient space is its vertex if and only if it represents a contour containing a critical point of the restriction ${\pi}_{2,1} {\mid}_{\overline{D}-D}$, where $\overline{D}-D$ is the disjoint union of finitely many copies of $S^1$ smoothly embedded in ${\mathbb{R}}^2$ as real algebraic curves disjointly.
\end{itemize}
\begin{Def}
\label{def:3}
The graph above is the {\it Poincar\'e-Reeb graph} of $D$ and we can also regard this as a digraph naturally (the {\it Poincar\'e-Reeb digraph} of $D$).
\end{Def}
\begin{Thm}[\cite{bodinpopescupampusorea}]
\label{thm:6}
Consider an SM-digraph $(G,c_G,G_{\rm m})$ with $G_{\rm m}$ being empty and with each vertex being of degree $1$ or $3$ which is represented as the composition of an embedding $\tilde{c_G}:G \rightarrow {\mathbb{R}}^2$ with ${\pi}_{2,1}$. Then we can choose an algebraic domain $D_{G,c_G} \subset {\mathbb{R}}^2$ whose Poincar\'e-Reeb digraph is isomorphic to $(G,c_G)$.
\end{Thm}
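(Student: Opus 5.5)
We would prove Theorem \ref{thm:6} in two stages: first a smooth construction of a domain whose Poincar\'e-Reeb digraph is $(G,c_G)$, then a polynomial approximation that does not change this digraph. Throughout, identify $G$ with its image $\tilde{c_G}(G) \subset {\mathbb{R}}^2$, so that the first coordinate ${\pi}_{2,1}$ restricts to $c_G$ on $G$. By assumption the first coordinate is injective on each edge, and the values at distinct vertices are distinct.

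\emph{Smooth step.} We would build a thin open neighbourhood $N$ of $G$ in ${\mathbb{R}}^2$ with smooth boundary. Along the interior of each edge, $N$ is a band $\{(x,y) \mid |y-g_e(x)|<\varepsilon(x)\}$, where $g_e$ is the smooth graph of the edge over its $x$-interval. Near the vertices we use the following local models.
\begin{itemize}
\item At a degree-$1$ vertex, which is a source or a sink, $N$ is capped off by a half-disc. Its boundary has exactly one nondegenerate critical point of ${\pi}_{2,1}$ there, a local extremum.
\item At a degree-$3$ vertex $v$, two edges leave on one side of $c_G(v)$ and one edge leaves on the other side. The two bands on the same side are joined by a notch whose innermost point lies on the line $x=c_G(v)$. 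This point is the unique critical point of ${\pi}_{2,1}$ on $\partial N$ near $v$; it is nondegenerate and is a fold of the opposite kind to an extremum.
\end{itemize}
Since the first coordinate is monotone on edges, $\varepsilon$ can be chosen small so that $\partial N$ has no other critical points of ${\pi}_{2,1}$, and so that the critical values are exactly the values $c_G(v)$ at the vertices, which are distinct. Each contour of ${\pi}_{2,1}{\mid}_{\overline{N}}$ is then a vertical segment, which deformation-retracts to $G \cap \{x=t\}$. This identifies the Poincar\'e-Reeb digraph of $N$ with $(G,c_G)$. Finally, we choose a smooth $F:{\mathbb{R}}^2 \rightarrow \mathbb{R}$ with $N=\{F>0\}$, $\partial N=\{F=0\}$, $dF \neq 0$ on $\partial N$, and $F \leq -1$ outside a compact disc $B$ containing $\overline{N}$ in its interior.

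\emph{Algebraic step.} By the approximation result recalled above (\cite[Proposition 1.3.7]{lellis}), there is a real polynomial $P$ that is $C^2$-close to $F$ on a slightly larger disc $B'$. If $P$ is close enough, then the following hold.
\begin{itemize}
\item $P<0$ on $B'-B$.
\item $0$ is a regular value of $P$ on $B$.
\item $\{P=0\} \cap B$ is isotopic to $\partial N$ through an isotopy that is $C^2$-small.
\end{itemize}
The conditions ``${\pi}_{2,1}$ restricted to the curve is Morse'' and ``the critical values are distinct'' are open in the $C^2$ topology, so the number, type and order of the critical points are preserved. We let $D_{G,c_G}$ be the component of $\{P>0\}$ inside $B$ that is close to $N$. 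It is an algebraic domain bounded by non-singular components of $\{P=0\}$; components of the zero set lying outside $B$ are simply discarded, as the definition of algebraic domain allows. By the stability above, its Poincar\'e-Reeb digraph is isomorphic to $(G,c_G)$.

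The main obstacle is the local model at degree-$3$ vertices in the smooth step. One must check that the notch produces exactly one nondegenerate critical point of the boundary projection, at height $c_G(v)$. One must also check that the vertical contours merge and split exactly as the edges of $G$ do, and that no spurious critical points appear where the bands of nearby edges approach each other. We would handle this with an explicit model: in coordinates centred at $v$, take $F=\delta^2-\big(y^2-a\,x\big)^2$-type expressions glued to the band functions $\varepsilon(x)^2-(y-g_e(x))^2$ by a partition of unity, and shrink the supports until monotonicity along the edges controls all other points.
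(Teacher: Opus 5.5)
Your proposal is correct and follows essentially the same route as the paper's (Bodin--Popescu-Pampu--Sorea) argument: build a thin smooth neighbourhood of $\tilde{c_G}(G)$ whose boundary curves make ${\pi}_{2,1}$ a simple Morse function, then take a $C^2$ polynomial approximation, which preserves that property. Using one global defining function $F$ instead of one function $f_{C'_j}$ per boundary curve is only a cosmetic difference. One small correction: the critical values at caps and notches cannot equal $c_G(v)$ exactly, since a notch point lies strictly inside the wedge between the two edges and hence off the line $x=c_G(v)$. They can, however, be taken arbitrarily close to $c_G(v)$, and preserving their order is all the digraph isomorphism needs.
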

We review the original proof, presented in the third section of the original paper with this statement. Some arguments may be different from the original ones. However, they are almost essentially same. 
In short, ${D_{G,c_G}}^{\prime} \subset {\mathbb{R}}^2$ is chosen as a small connected open neighborhood of the graph $\tilde{c_G}(G)$ surrounded by disjoint union of finitely many connected smooth curves ${C_j}^{\prime}$. We see each connected curve as a connected component of the zero set of a smooth function $f_{{C_j}^{\prime}}$ the rank of the differential of which is always $1$ on ${C_j}^{\prime}$, whose values are non-zero on the set ${D_j}^{\prime}-{C_j}^{\prime}$ in an open connected region ${D_j}^{\prime}$ in ${\mathbb{R}}^2$ containing the closure of ${D_{G,c_G}}^{\prime}$ in ${\mathbb{R}}^2$, and whose values are always $0$ on ${\mathbb{R}}^2-{D_j}^{\prime}$. By applying technique of approximation before, ${C_j}^{\prime}$ and $f_{{C_j}^{\prime}}$ are changed to a real algebraic curve $C_j$ and real polynomial function $f_{C_j}$.

What follows is not explained explicitly in the original article. Similar exposition is also shortly in the preprint \cite{kitazawa7} of the author, where we do not need knowledge on the preprint.

The curves ${C_j}^{\prime}$ are chosen in such a way that the restriction of ${\pi}_{2,1}$ to their union is a simple Morse function. As presented in \cite[Lemma 3.6]{bodinpopescupampusorea} and its proof, $C_j$ and $f_{C_j}$ are obtained by the technique of approximation in the $C^2$ topology. There, Bodin, Popescu-Pampu, and Sorea discuss which $1$-st and $2$-nd derivatives we should approximate and they approximate suitable derivatives. What follows is exposition respecting singularity theory of differentiable maps more. From fundamental arguments on singularity theory such as $C^k$ topologies, Whitney $C^k$ topologies, and local stability of smooth maps, $C_j$ and $f_{C_j}$ are obtained by the technique of approximation in the $C^2$ topology in such a way that the restriction of ${\pi}_{2,1}$ to the union of the curves $C_j$ is a simple Morse function. We can omit precise arguments on the choice and approximation of the derivatives. On the other hand, the original exposition is of course true and we can follow the exposition without fundamental knowledge on singularity theory of differentiable maps. Furthermore, as presented near \cite[Special Case 5]{kollar}, by adding non-positive or non-negative polynomial functions suitably, $C_j$ can be regarded as the zero set of $f_{C_j}$. Summarizing, we have the following.
\begin{Thm}
\label{thm:7}
In Theorem \ref{thm:6}, we can have the desired case with the following.
\begin{enumerate}
\item \label{thm:7.1} Each curve $C_j$ from all real algebraic curves surrounding $D_{G,c_G} \subset {\mathbb{R}}^2$ is the zero set of some corresponding real polynomial function $f_{C_j}$. 
\item \label{thm:7.2} The restriction ${\pi}_{2,1} {\mid}_{{\sqcup}_{j} C_j}$ is a simple Morse function.
\end{enumerate}
\end{Thm}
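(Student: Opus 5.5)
The plan is to prove Theorem \ref{thm:7} by revisiting the construction behind Theorem \ref{thm:6}. We fix a convenient smooth model first and only then pass to polynomials, arranging that both properties are open conditions which survive a sufficiently fine $C^2$ approximation.

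First, take the embedding $\tilde{c_G}:G \rightarrow {\mathbb{R}}^2$ with ${\pi}_{2,1} \circ \tilde{c_G}=c_G$. Let ${D_{G,c_G}}^{\prime}$ be a thin regular neighborhood of $\tilde{c_G}(G)$, bounded by finitely many disjoint smooth circles ${C_j}^{\prime}$. The boundary is chosen as follows:
\begin{itemize}
\item near the interior of each edge it consists of two arcs that are graphs over the $x_1$-direction, so ${\pi}_{2,1}$ has no critical points there;
\item near a vertex of degree $1$ it has exactly one fold-type (Morse) critical point of ${\pi}_{2,1}$, a local maximum or minimum matching whether the vertex is a sink or a source;
\item near a vertex of degree $3$ it has exactly one Morse critical point, on the boundary arc lying between the two edges going the same way.
\end{itemize}
Since the values of $c_G$ at distinct vertices are distinct, the critical values of ${\pi}_{2,1}{\mid}_{\sqcup_j {C_j}^{\prime}}$ are distinct, so this restriction is a simple Morse function. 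By construction the Poincar\'e-Reeb digraph of ${D_{G,c_G}}^{\prime}$ is isomorphic to $(G,c_G)$. For each $j$ choose a smooth $f_{{C_j}^{\prime}}$ that has ${C_j}^{\prime}$ as a regular component of its zero set, is nonzero elsewhere on a neighborhood ${D_j}^{\prime}$ of $\overline{{D_{G,c_G}}^{\prime}}$, and vanishes outside ${D_j}^{\prime}$.

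Next, approximate each $f_{{C_j}^{\prime}}$ on a large compact ball by a polynomial $g_j$ in the $C^2$ topology (\cite[Proposition 1.3.7]{lellis}). Regularity of the zero set near ${C_j}^{\prime}$ is $C^1$-open, so $g_j^{-1}(0)$ has a component $C_j$ near ${C_j}^{\prime}$ that is $C^2$-close to it, and the $C_j$ stay disjoint. Being a simple Morse function on a compact $1$-manifold is $C^2$-open: nondegenerate critical points persist, no new ones appear, and distinct critical values stay distinct. Hence ${\pi}_{2,1}{\mid}_{\sqcup_j C_j}$ is simple Morse, which gives (\ref{thm:7.2}). The region $D_{G,c_G}$ bounded by the $C_j$ is isotopic to ${D_{G,c_G}}^{\prime}$ through domains with simple Morse boundary projections. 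Therefore its Poincar\'e-Reeb digraph is still isomorphic to $(G,c_G)$, as in the argument of \cite[Lemma 3.6]{bodinpopescupampusorea}.

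Finally, for (\ref{thm:7.1}) we must remove the extra components of $g_j^{-1}(0)$, including those outside the compact set where we have no control. Following \cite[Special Case 5]{kollar}, replace $g_j$ by $f_{C_j}:=g_j^2+h_j$, where $h_j$ is a nonnegative polynomial that is positive away from $C_j$ and vanishes to second order along $C_j$. A first choice to try is $h_j=\epsilon\, g_j^2 \prod_{k}(\text{polynomials separating the other components})$ plus a large-degree term such as $\epsilon (x_1^2+x_2^2)^N$ that dominates at infinity. Because the square destroys regularity, it is cleaner to take $f_{C_j}=g_j\cdot u_j$ instead, where $u_j$ is a polynomial that is positive near $C_j$. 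We then check that $u_j$ can be chosen so that $g_j u_j$ has no other zeros.

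This last step is the main obstacle: we must exhibit a polynomial whose zero set is exactly the single compact oval $C_j$ and which is regular along it. The approach I would try first is to note that a compact nonsingular oval bounds a disk, and to use the separation lemma of \cite{bochnakcosteroy}. This gives a polynomial $p_j$ that is positive on $\overline{{\rm int}\,C_j}$, positive on a neighborhood of $C_j$, and negative on the other components of $g_j^{-1}(0)$. We then take $f_{C_j}:=g_j^2+\delta(\ldots)$ with a sign correction adjusted so that the zeros near $C_j$ stay transverse. If this fails, the fallback is to take $\overline{D}-D$ as a whole as the zero set of a single polynomial, which is what is actually used afterwards with Theorem \ref{thm:5}.
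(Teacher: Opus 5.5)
Your smooth model and your argument for (\ref{thm:7.2}) agree with the paper: a thin neighborhood of $\tilde{c_G}(G)$ on whose boundary $\pi_{2,1}$ restricts to a simple Morse function, then $C^2$ approximation, then openness of the simple Morse condition. However, (\ref{thm:7.1}) is never actually proved, and none of your candidates works.
\begin{itemize}
\item $g_j u_j$ vanishes on all of $g_j^{-1}(0)$ whatever $u_j$ is, since multiplying by a polynomial can only add zeros.
\item $g_j^2+h_j$ has zero differential along $C_j$ (as you note), and the required $h_j$ is never constructed.
\item The separation-lemma version is left as ``$\delta(\ldots)$ with a sign correction''.
\item The fallback, one polynomial for all of $\overline{D}-D$, is a weaker statement, and it needs the same control of stray zeros anyway.
\end{itemize}

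The missing idea is that the stray zeros come from the choice of smooth defining function. Once that choice is changed, the appeal to \cite[Special Case 5]{kollar} that the paper makes (adding a suitable sign-definite polynomial) goes through cleanly.
\begin{enumerate}
\item Since $C_j'$ is a smooth Jordan curve, take a smooth $\phi_j$ with $\phi_j^{-1}(0)=C_j'$ exactly and $d\phi_j\neq 0$ on $C_j'$. Build it as a monotone function of the signed normal coordinate in a collar, equal to $-1$ further inside and $\equiv 1$ outside a disc $B(0,r)$.
\item Approximate $\phi_j$ in the $C^2$ topology on a larger disc $B(0,R)$ by a polynomial $p_j$ of degree $d$. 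Inside $B(0,R)$ the zero set of $p_j$ is then a single regular oval near $C_j'$, because $|\phi_j|$ is bounded below off the collar and $p_j>0$ on $B(0,R)-B(0,r)$.
\item Add the non-negative polynomial $\epsilon(x_1^2+x_2^2)^N$ with $2N\geq d$ and $\epsilon=2A/R^{2N}$, where $A$ bounds $|p_j(x)|(R/|x|)^d$ on $\{|x|\geq R\}$. This term exceeds $|p_j|$ for $|x|\geq R$, and its $C^2$ norm on $B(0,r)$ is $O(N^2(r/R)^{2N})\to 0$ as $N\to\infty$.
\end{enumerate}
Hence $f_{C_j}:=p_j+\epsilon(x_1^2+x_2^2)^N$ has zero set exactly one regular oval $C_j$, $C^2$-close to $C_j'$. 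Property (\ref{thm:7.2}) and the Poincar\'e-Reeb digraph then survive exactly as you argued, and no separation lemma is needed. With a defining function that vanishes identically outside $D_j'$, as in your first step, the approximant has uncontrolled zeros even inside the compact set; that is where your argument got stuck.
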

 \section{Main Theorem \ref{mthm:0} revisited.}
We present Main Theorems \ref{mthm:0} again, more precisely.

 Main Theorem \ref{mthm:1} is for Main Theorem \ref{mthm:0} (\ref{mthm:0.1}).
\begin{MainThm}
\label{mthm:1}
In Main Theorem \ref{mthm:0} {\rm (}\ref{mthm:0.1}{\rm )}, the zero set $M$ is of the form
$$\{x:=(x_1,\cdots x_{m-1},x_m,x_{m+1}) \in {\mathbb{R}}^{m+1} \mid {\prod}_{i \in I} h_i({\Sigma}_{j=1}^{m-1} {x_j}^2,x_{m})-{x_{m+1}}^2=0\}$$
with some family $\{h_i:{\mathbb{R}}^2 \rightarrow \mathbb{R}\}_{i \in I}$ of finitely many real polynomial functions, indexed by $i \in I$ for some finite set $I$.
\end{MainThm}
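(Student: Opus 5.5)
By Theorem \ref{thm:4}, under the assumptions of Main Theorem \ref{mthm:0} (\ref{mthm:0.1}) the map $f$ is a TSC map. Up to $\mathcal{A}$-equivalence it is the product of a page function $\tilde{f_{{\rm P},x_0}}:S \rightarrow \mathbb{R}$ and the identity map on $S^{m-2}$. Moreover, its $\mathcal{A}$-equivalence class is determined by the SM-digraph $(R_{\tilde{f_{{\rm P},x_0}}},\bar{\tilde{f_{{\rm P},x_0}}},\emptyset)$, whose vertices have degree $1$ or $3$ by Theorem \ref{thm:1} (\ref{thm:1.2}). The conditions $m>4$, or $m=4$ with $S(f)-F_0(f)\neq\emptyset$, are exactly what makes Theorems \ref{thm:3} and \ref{thm:4} applicable. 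So it suffices to build, from that digraph, a zero set of the stated form on which ${\pi}_{m+1,m-1}$ is a round fold map whose page function has an isomorphic Reeb digraph. After shifting and rescaling values, which only changes things up to $\mathcal{A}$-equivalence, I assume the digraph is realized as ${\pi}_{2,1}$ composed with an embedding into $\{(s,t)\mid s>0\}$.

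The plan has four steps.

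\textbf{Step 1 (the planar domain).} I apply Theorems \ref{thm:6} and \ref{thm:7} in the $(s,t)$-plane. This gives an algebraic domain $D$ contained in $\{s>0\}$ whose Poincar\'e-Reeb digraph is isomorphic to the given one. Its boundary curves $C_i$ ($i \in I$) are exactly the zero sets of polynomials $f_{C_i}$ with nonvanishing differential on $C_i$, and ${\pi}_{2,1}$ restricted to $\bigsqcup_i C_i$ is simple Morse.

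\textbf{Step 2 (signs of the factors).} Each $C_i$ is a circle equal to the zero set of $f_{C_i}$, so $f_{C_i}$ has constant sign on each of the two components of ${\mathbb{R}}^2-C_i$. I set $h_i:=\pm f_{C_i}$, with the sign chosen to be positive on the component containing $D$. Since the inner boundary circles bound pairwise disjoint disks inside the outer one, every point outside $\overline{D}$ lies on the wrong side of exactly one $C_i$. Hence $h:=\prod_{i\in I} h_i$ is positive exactly on $D$ and zero exactly on $\bigsqcup_i C_i$. On $C_i$ only one factor vanishes, so $dh\neq 0$ there.

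\textbf{Step 3 (special generic map).} Put $D':=\{(x',x_m)\in{\mathbb{R}}^{m-1}\times\mathbb{R} \mid (|x'|^2,x_m)\in D\}$ and $f_{D',S}(x',x_m):=h(|x'|^2,x_m)$. Since $D\subset\{s>0\}$, the map $(x',x_m)\mapsto(|x'|^2,x_m)$ is a submersion near $\overline{D'}$. Therefore $f_{D',S}$ has rank $1$ on $\overline{D'}-D'$, and $D'=\{f_{D',S}>0\}$. Theorem \ref{thm:5} with $n=m$ and one extra variable $x_{m+1}$ then shows two things. The set $M$ in the statement is a closed $m$-dimensional submanifold, and ${\pi}_{m+1,m}{\mid}_M$ is special generic onto $\overline{D'}$.

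\textbf{Step 4 (round fold map and page function).} I compose with ${\pi}_{m,m-1}$, which amounts to forgetting $x_m$. The map $(x',x_m,x_{m+1})\mapsto x'$ is $SO(m-1)$-equivariant, so it has the spinning structure. Over a ray through $x_0\in S^{m-2}$, the preimage is the surface $\{(r x_0,x_m,x_{m+1})\mid x_{m+1}^2=h(r^2,x_m)\}$. This surface is the double of $\overline{D}$ along its boundary, transported by $s=r^2$, and the page function is $r$. Contours of $s$ on $\overline{D}$ are vertical segments whose doubles are circles, so the Reeb digraph of this page function is the Poincar\'e-Reeb digraph of $D$. Because $r=\sqrt{s}$ is monotone, that digraph is isomorphic to the prescribed one.

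Near $\bigsqcup_i C_i$, simplicity of ${\pi}_{2,1}{\mid}_{\bigsqcup C_i}$ should give local Morse (fold) forms: at a critical point of $s$ on $C_i$, the coordinates $(s-s_0,\,\text{tangent},\,x_{m+1})$ give a nondegenerate quadratic. Away from those points the map is a submersion. The result is a round fold map whose singular value set is a family of concentric spheres, after a radial reparametrization. Finally, Theorem \ref{thm:1} (\ref{thm:1.3}) together with the one-to-one correspondence in Theorem \ref{thm:4} and the uniqueness in Theorem \ref{thm:2} yield the $\mathcal{A}$-equivalence with $f$.

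I expect Step 4 to be the main obstacle. It requires carefully verifying that the composed map has exactly fold singularities along the preimages of the critical points of $s{\mid}_{\bigsqcup C_i}$, with no further degeneracy coming from the factor $x_{m+1}^2$. It also requires checking that the resulting page function is $\mathcal{R}$-equivalent to $\tilde{f_{{\rm P},x_0}}$, and not merely that it has an isomorphic Reeb digraph. For the latter I would rely on Theorem \ref{thm:1} (\ref{thm:1.3}) and Theorem \ref{thm:4}.
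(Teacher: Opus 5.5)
Your proposal is correct and follows essentially the same route as the paper. You realize the page function's Reeb digraph as the Poincar\'e-Reeb digraph of an algebraic domain via Theorems \ref{thm:6} and \ref{thm:7}, take the signed product of the boundary polynomials evaluated at $({\Sigma}_{j=1}^{m-1}{x_j}^2,x_m)$, apply Theorem \ref{thm:5} with $n=m$, show that ${\pi}_{m+1,m-1}$ restricted to the zero set is round fold, and identify it with $f$ through Theorems \ref{thm:3} and \ref{thm:4}. Your version makes explicit several points the paper leaves implicit: placing $\overline{D}$ in $\{s>0\}$ and passing through $r=\sqrt{s}$, the sign bookkeeping for the factors, and a direct fold computation in place of the citation of Saeki-Suzuoka. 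One small correction: for $m=4$ the final identification must come from the one-to-one correspondence in Theorem \ref{thm:4}, not from Theorem \ref{thm:2}, which requires $m\geq 5$.
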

\begin{proof}
For a given round fold map, we consider its page function $\tilde{f_{{\rm P},x_0}}$, where the function $\bar{\tilde{f_{{\rm P},x_0}}}$ is the composition of an embedding into ${\mathbb{R}}^2$ with ${\pi}_{2,1}$. The function $\tilde{f_{{\rm P},x_0}}$ is seen as a simple Morse function on a closed, connected and orientable surface diffeomorphic to $S$ into $\{t \geq \frac{1}{2}\} \subset \mathbb{R}$. By considering Theorems \ref{thm:6} and \ref{thm:7}, we have an algebraic domain $D_{R_{\tilde{f_{{\rm P},x_0}}},\bar{\tilde{f_{{\rm P},x_0}}}}$ whose Poincar\'e-Reeb digraph is isomorphic to $(R_{\tilde{f_{{\rm P},x_0}}},\bar{\tilde{f_{{\rm P},x_0}}})$ and have each real polynomial function "$f_{C_i}$ of Theorem \ref{thm:7} (, with $j$ being changed into $i$,)" for the present situation, denoted by $h_i$. By using $h_i({\Sigma}_{j=1}^{m-1} {x_j}^2,x_{m})$ and applying Theorem \ref{thm:5} with the sign of each $h_j$ being suitably chosen and $f_{D,S}(x)={\prod}_{i \in I} h_i({\Sigma}_{j=1}^{m-1} {x_j}^2,x_{m})$ with $n=m$ under the notation from Theorem \ref{thm:5}, we have a special generic map on the zero set $M$ into ${\mathbb{R}}^m$ as ${\pi}_{m+1,m} {\mid}_{M}$. The restriction of ${\pi}_{m+1,m-1}$ to $M$ is also shown to be a round fold map by fundamental observations on the singularity, respecting \cite[Propsition 6.1 and Corollary 6.2]{saekisuzuoka}. For singularity of these fold maps, see also \cite{fukuda} for example. Due to Theorem \ref{thm:3} (\ref{thm:3.2}, \ref{thm:3.3}) and Theorem \ref{thm:4}, the round fold map is a desired round fold map (up to $\mathcal{A}$-equivalence).   

This completes the proof.
\end{proof}
 Main Theorem \ref{mthm:2} is for Main Theorem \ref{mthm:0} (\ref{mthm:0.2}).
\begin{MainThm}
\label{mthm:2}
In Main Theorem \ref{mthm:0} {\rm (}\ref{mthm:0.2}{\rm )}, the zero set $M$ is of the form
$$\{x:=(x_1,\cdots x_{m-1},x_m,x_{m+1}) \in {\mathbb{R}}^{m+1} \mid {\prod}_{i \in I} (h_i({\Sigma}_{j=1}^{m-1} {x_j}^2,x_{m})) \times H_{\rm S}((x_j)_{j=1}^m)-{x_{m+1}}^2=0\}$$
with a finite family $\{h_i:{\mathbb{R}}^2 \rightarrow \mathbb{R}\}_{i \in I}$ of suitable real polynomial functions and a real polynomial function $H_{\rm S}:{\mathbb{R}}^m \rightarrow \mathbb{R}$ represented as the product of finitely many real polynomials $h_{{\rm S},i^{\prime}}$ whose zero sets $\{x:=(x_1,\cdots x_{m}) \in {\mathbb{R}}^{m} \mid  h_{{\rm S},i^{\prime}}(x)=0\}$ are submanifolds of ${\mathbb{R}}^m$ diffeomorphic to $S^{m-1}$. 
\end{MainThm}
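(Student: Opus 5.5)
We follow the pattern of the proof of Main Theorem \ref{mthm:1}, with one extra ingredient to cap off the inner boundary of the image. Since $f(M)$ is diffeomorphic to $D^{m-1}$, Theorems \ref{thm:3} (\ref{thm:3.1}) and \ref{thm:4} show that $f$ is a TSC map. Its page function $\tilde{f_{{\rm P},x_0}}$ is then a simple Morse function on a compact, connected, orientable surface $S^{\prime}$ whose boundary is the preimage of its minimum (a disjoint union of circles). By Theorem \ref{thm:1}, $(R_{\tilde{f_{{\rm P},x_0}}},\bar{\tilde{f_{{\rm P},x_0}}},{R}_{\rm m})$ is an SM-digraph with non-empty ${R}_{\rm m}$ and vertices of degree $1$ or $3$, and by Theorem \ref{thm:4} this triplet determines $f$ up to $\mathcal{A}$-equivalence. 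The plan is to realize the double of this SM-digraph (Definition \ref{def:2}) by an algebraic domain, spin it, and then cut the domain along the level of the minimum by spheres.

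\textbf{Step 1.} Embed the double into ${\mathbb{R}}^2$, compatibly with ${\pi}_{2,1}$, symmetrically about a vertical line. The points of $G_{\rm m}$ then lie on the line $\{x_1=m_{c_G}\}$ (after translating, $\{x_1=\frac{1}{2}\}$), with $G$ on the side $x_1 \geq \frac{1}{2}$. Apply Theorems \ref{thm:6} and \ref{thm:7} to obtain an algebraic domain $D^{\prime}$ bounded by curves $C_i=\{h_i=0\}$ on which ${\pi}_{2,1}$ is simple Morse. Away from the line, the part of $D^{\prime}$ over $x_1>\frac{1}{2}$ has Poincar\'e-Reeb digraph $G$ minus small neighbourhoods of $G_{\rm m}$, and near each point of $G_{\rm m}$ the domain is a thin horizontal strip crossing the line.

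\textbf{Step 2.} Substitute $h_i({\Sigma}_{j=1}^{m-1} {x_j}^2,x_m)$, as in Main Theorem \ref{mthm:1}; here we let the first coordinate of ${\mathbb{R}}^2$ play the role of $r={\Sigma}_{j=1}^{m-1}{x_j}^2$ rather than $\sqrt{r}$, which only reparametrizes the monotone direction. This gives a region in ${\mathbb{R}}^m$ whose projection to ${\mathbb{R}}^{m-1}$ is rotation-symmetric. Choose polynomials $h_{{\rm S},i^{\prime}}$, one for each point of $G_{\rm m}$, of the form $h_{{\rm S},i^{\prime}}(x)={\Sigma}_{j=1}^{m-1}{x_j}^2+\epsilon (x_m-a_{i^{\prime}})^2-\rho$, where $a_{i^{\prime}}$ is the height of the strip and $\rho$ and the small $\epsilon$ are chosen so that the ellipsoid $\{h_{{\rm S},i^{\prime}}=0\} \cong S^{m-1}$ crosses that strip transversally, near the level $\frac{1}{2}$, and meets no other part of the domain. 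We then take the signs of the factors so that $D:=\{{\prod}_i h_i \cdot H_{\rm S}>0\}$ is the part of the spun domain outside all the ellipsoids, that is, the spin of $G$ with each strip end capped off by a solid region. Applying Theorem \ref{thm:5} with $n=m$ gives the manifold $M$ and a special generic map ${\pi}_{m+1,m}{\mid}_M$.

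\textbf{Step 3.} Show that ${\pi}_{m+1,m-1}{\mid}_M$ is a round fold map whose page function is $\mathcal{R}$-equivalent to the given one, using \cite[Proposition 6.1 and Corollary 6.2]{saekisuzuoka} as before. Near each capped end, the preimage over a radial ray is an annulus collapsing to a boundary circle, so the page function has minimum level exactly the boundary circles and no critical points there. Then conclude with Theorems \ref{thm:2} and \ref{thm:4}.

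\textbf{Main obstacle.} The hard part is Step 2: we need $\overline{D}-D$ to be a non-singular zero set with the differential of ${\prod}h_i \cdot H_{\rm S}$ of rank $1$ everywhere on it. This fails wherever an ellipsoid meets a curve $\{h_i(r,x_m)=0\}$, because there the product vanishes to second order; in particular the ellipsoids cannot cut the strips while ${\prod}_i h_i$ remains a factor. So the real work is to arrange instead that the spun strips themselves close up near the level $r=\frac{1}{2}$, bounded by tubes that the spheres $\{h_{{\rm S},i^{\prime}}=0\}$ replace. I would first try a deformation: replace the product by ${\prod}_i h_i \cdot H_{\rm S}-\delta$ for a small constant $\delta>0$, which smooths the corners. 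I would then check, via $C^2$-closeness and stability of Morse functions, that this perturbation preserves the simple Morse property on the boundary and the isomorphism type of the digraph.
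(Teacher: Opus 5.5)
There is a genuine gap, and it goes beyond the corner problem you flag at the end. You put the symmetry axis of the double at the level $\frac{1}{2}$ and then cut the spun strips with the ellipsoids $\{h_{{\rm S},i^{\prime}}=0\}$. The gradient of ${\prod}_i h_i \cdot H_{\rm S}$ then vanishes along the intersections, so Theorem 5 does not apply.

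Your repair ${\prod}_i h_i \cdot H_{\rm S}-\delta$ has two problems. First, it is no longer of the form required in the statement. Second, $\{{\prod}_i h_i \cdot H_{\rm S}>\delta\}$ retains the regions inside an ellipsoid and just outside the spun domain, where both factors are negative. These survive as separate bounded components, so the zero set acquires extra components.

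More fundamentally, even a perfect smoothing gives the wrong map. The domain you describe, the spin of $G$ with capped strip ends, stays away from the rotation axis $\{x_1=\cdots=x_{m-1}=0\}$. So the image of ${\pi}_{m+1,m-1}$ is an annulus $S^{m-2}\times D^1$ rather than $D^{m-1}$. Moreover, Theorem 5 doubles along the whole boundary, caps included. Each cap therefore produces an index-$0$ fold point, i.e.\ a local minimum of the page function, not the ``annulus collapsing to a boundary circle'' claimed in Step 3.

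The missing idea is the normalization in the paper's proof. Translate the minimum of the Reeb digraph of the page function to $0$, not to $\frac{1}{2}$. Then embed the double symmetrically about $\{(0,t) \mid t \in \mathbb{R}\}$, which is now the rotation axis itself, so nothing has to be cut.

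The strips through the points of $G_{\rm m}$ cross the rotation axis, so the spun domain contains points over the origin of ${\mathbb{R}}^{m-1}$ and projects onto a disc. The boundary curves missing the axis give the factors $h_i({\Sigma}_{j=1}^{m-1}{x_j}^2,x_m)$ as in Main Theorem 1. Each reflection-invariant curve meeting the axis (transversally, in two points) spins to a hypersurface diffeomorphic to $S^{m-1}$. Approximating a smooth defining function for it by a polynomial yields $h_{{\rm S},i^{\prime}}$.

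All these hypersurfaces are pairwise disjoint, so the product has rank-$1$ differential on its zero set and Theorem 5 applies directly. The preimage of a ray from the origin is the double of the part of the planar domain with nonnegative first coordinate. Its boundary circles lie over the origin and are exactly the minimum level of the page function. Theorem 3 (1) together with Theorem 4 then identifies the resulting round fold map with the given one.
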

\begin{proof}
The story of the proof is essentially same as that of Main Theorem \ref{mthm:1}, where arguments on an algebraic domain with curves for Theorem \ref{thm:6} and \ref{thm:7} are a bit different.

For a given round fold map, we consider its page function $\tilde{f_{{\rm P},x_0}}$, where the function $\bar{\tilde{f_{{\rm P},x_0}}}$ is the composition of an embedding into ${\mathbb{R}}^2$ with ${\pi}_{2,1}$. The function $\tilde{f_{{\rm P},x_0}}$ is seen as a simple Morse function on a compact, connected and orientable surface diffeomorphic to $S$ whose boundary $\partial S$ is non-empty into $\{t \geq \frac{1}{2}\} \subset \mathbb{R}$.
We replace the minimum of $\bar{\tilde{f_{{\rm P},x_0}}}$ by $0$ and having an SM-digraph $(R_{\tilde{f_{{\rm P},x_0,0}}},\bar{\tilde{f_{{\rm P},x_0,0}}},{R_{\tilde{f_{{\rm P},x_0,0}}}}_{\rm m})$ isomorphic to the original SM-digraph. We consider its double $(D({R_{\tilde{f_{{\rm P},x_0,0}}}}),D(\bar{\tilde{f_{{\rm P},x_0,0}}}))$.

We apply Theorems \ref{thm:6} and \ref{thm:7} for $(D({R_{\tilde{f_{{\rm P},x_0,0}}}}),D(\bar{\tilde{f_{{\rm P},x_0,0}}}))$. First, we consider an embedding of $D({R_{\tilde{f_{{\rm P},x_0,0}}}})$ into ${\mathbb{R}}^2$ with respect to the axis $\{(0,t) \mid t \in \mathbb{R}\}$ such that its composition with ${\pi}_{2,1}$ is $D(\bar{\tilde{f_{{\rm P},x_0,0}}})$ and obtain a domain $D_{D({R_{\tilde{f_{{\rm P},x_0,0}}}}),D(\bar{\tilde{f_{{\rm P},x_0,0}}})}$ which is "almost" an algebraic domain, where some curves $C_j$ are chosen to be only smooth. The philosophy of the method is similar to that in a similar step in the proof of Main Theorem \ref{mthm:1}.
We can have this with the following properties thanks to arguments around Theorems \ref{thm:6} and \ref{thm:7}. We abuse the notation from Theorems \ref{thm:6} and \ref{thm:7} and discussions around them.
\begin{itemize}
\item Each curve $C_j:=C_{j_1}$ in $\{s>0\} \times \mathbb{R} \subset {\mathbb{R}}^2$ is a real algebraic curve.
\item Each curve $C_j:=C_{j_2}$ in $\{s<0\} \times \mathbb{R} \subset {\mathbb{R}}^2$ is a curve obtained by a reflection of the uniquely chosen curve $C_{j_1}$ in $\{s>0\} \times \mathbb{R} \subset {\mathbb{R}}^2$ above, along the axis $\{(0,t) \mid t \in \mathbb{R}\}$. 
\item Each curve $C_j:=C_{j_3}$ whose intersection with the axis $\{(0,t) \mid t \in \mathbb{R}\}$ is non-empty is, by the reflection along the axis $\{(0,t) \mid t \in \mathbb{R}\}$, transformed onto itself. 
\end{itemize}
In short, the domain $D_{D({R_{\tilde{f_{{\rm P},x_0,0}}}}),D(\bar{\tilde{f_{{\rm P},x_0,0}}})}$ and the curves are located with symmetry with respect to the axis $\{(0,t) \mid t \in \mathbb{R}\}$.

For $C_{j_1}$ and $C_{j_2}$ above, we can define and consider $h_i({\Sigma}_{j=1}^{m-1} {x_j}^2,x_{m})$ as in Main Theorem \ref{mthm:1}. For $C_{j_3}$ above, we define smooth functions ${h_{{\rm S}^{\prime},i^{\prime}}}:{\mathbb{R}}^m \rightarrow \mathbb{R}$, before we obtain real polynomial functions $h_{{\rm S},i^{\prime}}$ whose zero sets are submanifolds of ${\mathbb{R}}^m$ diffeomorphic to $S^{m-1}$. As in the last of Remark \ref{rem:1}, presented later, by fundamental arguments on approximation in singularity theory of smooth maps, we can approximate ${h_{{\rm S}^{\prime},i^{\prime}}}:{\mathbb{R}}^m \rightarrow \mathbb{R}$ by a real polynomial function $h_{{\rm S},i^{\prime}}$ and by Theorem \ref{thm:5}, the arguments on singularity of special generic maps and fold maps presented in the end of the proof of Main Theorem \ref{mthm:1}, and Theorem \ref{thm:3} (\ref{thm:3.1}) with Theorem \ref{thm:4}, we have a desired round fold map (up to $\mathcal{A}$-equivalence).

This completes the proof. 
\end{proof}
We close the present paper by remarks on real algebraic realizations of smooth maps from the general viewpoint and our real algebraic realization.  
\begin{Rem}
\label{rem:1}
A smooth map $f:M \rightarrow {\mathbb{R}}^n$ on an $m$-dimensional manifold $M$ with no boundary is, by fundamental arguments on differential topology, represented as the composition of a smooth embedding into the Euclidean space ${\mathbb{R}}^{m+m^{\prime}}$ of sufficiently high dimension $m+m^{\prime}$ with $m^{\prime}>0$ being an integer, with ${\pi}_{m+m^{\prime},n}$. Let $M$ be closed. In such a case, we can approximate the embedding in the $C^k$ topology (the Whitney $C^k$ topology) and the manifold $M \subset {\mathbb{R}}^{m+m^{\prime}}$ is seen as a smooth compact submanifold with no boundary and the zero set of some real polynomial map $e_{m+m^{\prime},m^{\prime}}:{\mathbb{R}}^{m+m^{\prime}}  \rightarrow {\mathbb{R}}^{m^{\prime}}$. For this, consult \cite{lellis}, with \cite{kollar}, again. This is also related to Nash and Tognoli's theory \cite{nash, tognoli} on realizing a smooth closed manifold as the zero set of a real polynomial map in such a way that the differential of the restriction of the real polynomial map $e_{m+m^{\prime},m^{\prime}}$ there is always of the full rank. For real algebraic geometry, see also the textbook \cite{bochnakcosteroy}, for example.   

In the case $f$ is a so-called {\it generic} smooth map on a closed manifold such as a fold map, we can have a real algebraic realization of a generic smooth map sufficiently close to the original map and similar from the viewpoint of singularity. More strongly, if a generic smooth map is a so-called {\it stable} map, we have a real algebraic realization of the original map up to $\mathcal{A}$-equivalence. Round fold maps are stable maps and related arguments are used in the proof of Main Theorem \ref{mthm:2}.
\end{Rem}
\begin{Rem}
\label{rem:2}
Our realization is more explicit than the case of Remark \ref{rem:1}.
\end{Rem}

Last, we give an improvement of Main Theorem \ref{mthm:2} in a specific case.
\begin{MainThm}
\label{mthm:3}
In Main Theorem \ref{mthm:0}, let $F_0(f)$ be connected. 
Then we have a case of Main Theorem \ref{mthm:2} with $h_{{\rm S},i^{\prime}}(x)$ having the form $\pm ({\Sigma}_{j=1}^{m} ({x_j}^2)-r_{i^{\prime}})$ with $r_{i^{\prime}}>0$. 
\end{MainThm}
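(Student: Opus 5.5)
The plan is to rerun the proof of Main Theorem \ref{mthm:2}, but now the curves $C_{j_3}$ meeting the axis $\{(0,t) \mid t \in \mathbb{R}\}$ are chosen to be round circles centred on the axis.

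\textbf{Step 1: reduction to one axis-crossing curve.} In case (\ref{mthm:0.2}), $f(M)$ is diffeomorphic to $D^{m-1}$. By the description after Theorem \ref{thm:3}, $F_0(f)$ contains the outermost fold sphere. Since $F_0(f)$ is connected, it is exactly this sphere, so the page function $\tilde{f_{{\rm P},x_0}}$ has exactly one local maximum. It has no interior local minimum, because its minimum level set is the boundary. Therefore the Reeb digraph $(R_{\tilde{f_{{\rm P},x_0}}},\bar{\tilde{f_{{\rm P},x_0}}})$ has a unique sink $v_{\max}$. All its other vertices are degree-$3$ vertices or the degree-$1$ vertices in the third set.

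In the double constructed as in Definition \ref{def:2}, the reflected copy has a unique source $v_{\max}^{\ast}$. The embedding into ${\mathbb{R}}^2$, symmetric with respect to the axis, is taken with ${\pi}_{2,1}$ the height along the axis. We choose it so that $v_{\max}$ and $v_{\max}^{\ast}$ lie on the axis.

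\textbf{Step 2: making the axis-crossing curve a round circle.} Choose the symmetric thin neighbourhood $D'$ of the embedded double so that its outer boundary curve is the round circle $\{s^2+t^2=r\}$. This circle is tangent from inside at the top (near $v_{\max}$) and at the bottom (near $v_{\max}^{\ast}$). The inner boundary components do not meet the axis. The connectedness of $F_0(f)$ is what should allow this: every critical point of ${\pi}_{2,1}$ on the outer boundary other than the top and bottom corresponds to a saddle, not to a further extremum.

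I expect this to be the main obstacle. The outer curve is forced to have exactly one maximum and one minimum of the height. Then the whole graph has to sit inside the disc, with the holes carrying all the saddle structure, and the Poincar\'e-Reeb digraph must still be isomorphic to the double. To do this I would first isotope the embedding of the double inside a disc, keeping levels fixed, so that the outermost contour is the disc boundary. This is essentially possible because a digraph with a unique sink and a unique source can be drawn inside a vertically convex region.

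\textbf{Step 3: applying Theorems \ref{thm:6}, \ref{thm:7} and \ref{thm:5}.} Apply Theorems \ref{thm:6} and \ref{thm:7} only to the inner curves $C_{j_1}$ and $C_{j_2}$, keeping them disjoint from the circle. Take the functions $h_i$ as before, with $h_i(\Sigma_{j=1}^{m-1}{x_j}^2,x_m)$, and take $H_{\rm S}=\pm(\Sigma_{j=1}^m {x_j}^2-r)$. This is already a polynomial whose zero set is $S^{m-1}$ with nonvanishing differential, so no approximation is needed for it. Choose the signs so that the product is positive exactly on the rotated domain.

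Theorem \ref{thm:5} then gives a special generic map ${\pi}_{m+1,m}\mid_M$. As in the proof of Main Theorem \ref{mthm:1}, ${\pi}_{m+1,m-1}\mid_M$ is a round fold map. Theorems \ref{thm:3} (\ref{thm:3.1}) and \ref{thm:4} identify it with $f$ up to $\mathcal{A}$-equivalence.
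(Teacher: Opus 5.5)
\textbf{There is a genuine gap: your reduction to a single axis-crossing curve fails.}

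Your Step~1 sets up the symmetric embedding the wrong way round, and this hides the real difficulty. Substituting $(\Sigma_{j=1}^{m-1}{x_j}^2,x_m)$ for the plane coordinates rotates the picture about the axis $\{(0,t)\mid t\in\mathbb{R}\}$. So the height $\pi_{2,1}$ must be the coordinate \emph{transverse} to the axis: it becomes the radial coordinate, i.e.\ the page function of ${\pi}_{m+1,m-1}{\mid}_M$. The reflection in the axis therefore reverses height and exchanges $G$ and $G^{\ast}$. Consequently:
\begin{itemize}
\item $v_{\max}$ and $v_{\max}^{\ast}$ are mirror images at the two extreme heights, off the axis. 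The outer circle touches the graph near its two points farthest from the axis, not where it meets the axis.
\item The embedded double meets the axis exactly in the $k$ points of the third set. There is one for each boundary circle of the page surface, equivalently one for each circle of the fibre of $f$ over the innermost region of $f(M)$.
\item The $k-1$ bounded segments of the axis between consecutive crossing points lie in $k-1$ distinct bounded faces. The boundary curve of the domain inside each of these faces also crosses the axis.
\end{itemize}
So your assertion that the inner boundary components avoid the axis holds only when $k=1$. With a single factor $\pm(\Sigma_{j=1}^{m}{x_j}^2-r)$ and all other factors non-vanishing on the axis, $M\cap\{x_1=\cdots=x_{m-1}=0\}$ is one circle. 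Hence your map cannot be $\mathcal{A}$-equivalent to $f$ when $k\geq 2$. Connectedness of $F_0(f)$ does not force $k=1$. Take an annulus with both boundary circles at the minimum level, one saddle and one maximum. By the trivial spinning construction it yields a round fold map as in Main Theorem~\ref{mthm:0}~(\ref{mthm:0.2}), on $S^m$ by Theorem~\ref{thm:3}~(\ref{thm:3.1}) with $l=0$, with $F_0(f)$ connected and $k=2$.

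\textbf{The missing step is the one the paper's proof supplies:} the observation you make for the outer curve has to be made for every face meeting the axis. Since $F_0(f)$ is connected, the double has exactly two vertices of degree $1$, one source and one sink, and all others have degree $3$. Hence the unbounded face $R_0$, and each bounded face $R_a$ meeting the axis, is bounded by two arcs on which $\pi_{2,1}$ is injective, a lower one and an upper one. For $R_a$ these two arcs meet only at two degree-$3$ vertices. After a vertical, level-preserving deformation of the embedded graph, every axis-crossing curve $C_{j_3}$ can be taken to be a round circle centred on the axis. This includes the outer one and all $k-1$ inner ones, and each gives a quadratic sphere factor. Only the symmetric pairs $C_{j_1},C_{j_2}$ need Theorems~\ref{thm:6} and~\ref{thm:7}.

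One caution when you carry this out: the inner circles bound pairwise disjoint holes, so for $k\geq 3$ they cannot all be centred at the origin. Those centred at $(0,c)$ with $c\neq 0$ give factors $\Sigma_{j=1}^{m-1}{x_j}^2+(x_m-c)^2-r$. These are still polynomials in $(\Sigma_{j=1}^{m-1}{x_j}^2,x_m)$.
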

\begin{proof}
The function $\bar{\tilde{f_{{\rm P},x_0}}}$ is the composition of an embedding into ${\mathbb{R}}^2$ with ${\pi}_{2,1}$. In the proof of Main Theorem \ref{mthm:2}, the embedding of the graph into ${\mathbb{R}}^2$ can chosen satisfying the symmetry with respect to $\{(0,t) \mid t \in \mathbb{R}\}$, naturally. We consider the complementary set of the image of the embedded graph in ${\mathbb{R}}^2$. This is the disjoint union of finitely many connected open sets of course. The connected open sets intersecting $\{(0,t) \mid t \in \mathbb{R}\}$ satisfy the following.
\begin{itemize}
\item Exactly one set $R_0$ is unbounded and the set $\overline{R_0}-R_0$ is the union of two curves $C_{R_0,1}$ and $C_{R_0,2}$, where the set $\overline{R_0}$ is the closure of $R_0$ in ${\mathbb{R}}^2$. The restrictions of ${\pi}_{2,1}$ to the two curves are both injective and $C_{R_0,1} \bigcap C_{R_0,2}$ is homeomorphic to the disjoint union of two copies of $D^1$. 
\item The remaining sets $R_{a}$ ($a \neq 0$) are all bounded. Each set $\overline{R_{a}}-R_a$ ($a \neq 0$) is the union of two curves $C_{R_a,1}$ and $C_{R_a,2}$, where the set $\overline{R_a}$ is the closure of $R_a$ in ${\mathbb{R}}^2$. The restrictions of ${\pi}_{2,1}$ to the two curves are both injective and $C_{R_a,1} \bigcap C_{R_a,2}$ is a discrete two-point set consisting of two vertices of degree $3$ of the graph. 
\end{itemize}
In knowing this, it is important to remember that the graph is connected and that the number of vertices of degree $1$ of the graph is exactly $2$. The latter fact comes from the assumption that $F_0(f)$ is connected. Furthermore, we can deform the embedded graph vertically, suitably, and have the following. For the second case $R_{a}$ ($a \neq 0$), $C_{R_a,1}$ is below the set $C_{R_a,1} \bigcap C_{R_a,2}$, $C_{R_a,2}$ is above the set $C_{R_a,1} \bigcap C_{R_a,2}$, and we can have each curve $C_{j_3}=\{(x_1,x_{m+1}) \in {\mathbb{R}}^2 \mid {x_1}^2+{x_{m+1}}^2-r_{j_3}=0\}$ with a suitable $r_{j_3}>0$ in the proof of Main Theorem \ref{mthm:2}. From this, we have a desired real polynomial function $h_{{\rm S},i^{\prime}}(x)$. This completes the proof.
\end{proof}

\end{document}